\DeclareMathOperator{\Span}{span}
\DeclareMathOperator{\co}{co}
\DeclareMathOperator{\face}{face}
\def\C{\mathbb{C}}
\def\R{\mathbb{R}}
\def\B{\mathcal{B}}
\DeclareMathOperator{\RR}{\,R\,}
\newtheorem{theorem}{Theorem} 
\newtheorem{lemma}[theorem]{Lemma} 
\newtheorem{corollary}[theorem]{Corollary}
\theoremstyle{remark}
\newtheorem*{notation}{Notation}
\newtheorem*{definition}{Definition}
\newif\ifproofing
\newcommand{\note}[1]{\ifproofing\marginpar{#1}\fi}
\newenvironment{lem}[2]{
\note{#1}\begin{lemma}\label{#1}#2
}
{\end{lemma}}
\newenvironment{thm}[2]{
\note{#1}\begin{theorem}#2\label{#1}
}
{\end{theorem}}
\newenvironment{cor}[2]{
\note{#1}\begin{corollary}\label{#1}#2
}
{\end{corollary}}
\newenvironment{eqn}[2]{
\note{#1}\begin{equation}\label{#1}#2
}
{\end{equation}}
\begin{document}

\title[Unique decompositions of separable states]{Unique decompositions, faces, and automorphisms of separable states}
\author{Erik Alfsen}
\address{Mathematics Department, University of Oslo, Blindern 1053, Oslo, Norway}
\author{Fred Shultz}
\address{Mathematics Department, Wellesley College, Wellesley, Massachusetts 02481, USA}
\keywords{entanglement, separable state, face, affine automorphism}
\subjclass[2000]{Primary 46N50, 46L30; Secondary 81P68, 94B27}
\date{September 17, 2009}
\begin{abstract}
Let $S_k$ be the set of separable states on $\B(\C^m \otimes \C^n)$ admitting a representation as a convex combination of $k$ pure product states, or fewer.  If $m>1, n> 1$, and  $k \le \max{(m,n)}$,  we show that $S_k$ admits a subset $V_k$ such that $V_k$ is dense and open in $S_k$, and such that each state in $V_k$ has a unique decomposition as a convex combination of pure product states, and we describe  all possible convex decompositions for a set of separable states that properly contains $V_k$. In both cases we describe the associated faces of the space of separable states, which in the first case are simplexes, and in the second case are direct convex sums of faces that are isomorphic to state spaces of full matrix algebras. As an application of these results, we characterize all affine automorphisms of the convex set of separable states, and all automorphisms of the state space of $\B(\C^m \otimes \C^n)$ that preserve entanglement and separability.
\end{abstract}

\maketitle

\section{Introduction}

 A state on the algebra $\B(\C^m \otimes \C^n)$ of linear operators is separable if it is a convex combination of product states.  States that are not separable are said to be entangled, and are of substantial interest in quantum information theory.  Easily applied conditions for separability are known only for special cases, e.g., if $m = n = 2$, then a state is separable iff its associated density matrix has positive partial transpose, cf. \cite{Peres, Horodeckis}.  Other necessary and sufficient conditions are known, e.g.  \cite{Horodeckis}, but are not easily applied in practice.  An open question of great interest is to find a simple necessary and sufficient condition for a state to be separable. 
 
A product state $\omega \otimes \tau$ is a pure state iff $\omega$ and $\tau$ are pure states. Thus a separable state is precisely one that admits a representation as a convex combination of pure product states.
 It is natural to ask the extent to which this decomposition is unique.  That is the main topic of this article.

For the full state space $K$ of $\B(\C^m \otimes \C^n)$ each non-extreme point can be decomposed into extreme points 
in many different ways. But for the space $S$ of separable states the situation is totally different. While non-extreme points
with many different decompositions exist (and are easy to find) in $S$ as well as in $K$, 
there are in $S$ also plenty of points for which the decomposition is unique. 

DiVincenzo, Terhal, and  Thapliyal \cite{DiV}   defined the \emph{optimal ensemble cardinality} of a separable state $\rho$ to be $k$ if   $k$ is the minimal number of pure product states required for a convex decomposition of $\rho$. Lockhart \cite{Lockhart} used the term ``optimal ensemble length" for the same notion. For brevity, we will  simply call this number the \emph{length} of $\rho$, and we denote the set of separable states of length at most $k$ by $S_k$. 
We show in Theorem \ref{cor4} that for $m>1, n> 1$ and $k \le \max(m,n)$, the set $S_k$ has a  subset  $V_k$ which is  dense and open in $S_k$, with each $\sigma \in V_k$ admitting a unique decomposition into pure product states. Actually,
we exhibit such a set $V_k$ consisting of states with the property that each generates 
a face of $S$ which  is a simplex, from which the uniqueness follows.  

We remark that the sets $V_k$ are open and dense in the relative topology on $S_k$, but are not open or dense in $S$ or $K$ if $mn > 1$. (See the remarks after Theorem \ref{cor4}).   Indeed it would be surprising if a subset of low rank separable states were open and dense in the set of all states of that rank, since low rank states are almost surely entangled \cite{RuskaiWerner, WalgateScott}, and in general  $S$ has measure  which is a decreasingly small fraction of the measure of $K$ as $m, n$ increase, cf. \cite{AubrunSzarek, Szarek}.

While dimensions are too high to be able to accurately visualize the above results, the  reader may be curious about the relationship  to the well known tetrahedron/octahedron picture for $m = n = 2$, cf. \cite{HorodeckiTetra}. In that picture, 
there is a subset $\mathcal{T}$ of states which is a tetrahedron, and which has the property that for every state $\rho$ which restricts to the normalized trace on $\B(\C^2) \otimes I$ and on $I \otimes B(\C^2)$, there are unitaries $U$ and $V$ such that $(U\otimes V)^*\rho(U\otimes V) \in \mathcal{T}$.
The midpoints of the six edges of this tetrahedron are the vertices of an octahedron that consists of the separable states in $\mathcal{T}$.  Each vertex of the octahedron is  a convex combination of two distinct pure product states (which of course are not in $\mathcal{T}$), cf. \cite[eqn. (63)]{Ovrum}. In fact,  the vertices are the only states in the octahedron of length $\le  \max(m,n)= 2$.

 It can be checked (e.g., by applying our Corollary \ref{cor3.9}) that the decomposition of each of these vertices into pure product states is unique.   Each state in the interior of this octahedron has rank $4 = mn$, so is an interior point of the full state space $K$, hence has a non-unique convex decomposition into pure product states (see the remarks after Theorem \ref{cor4}.) The tetrahedron also arises as a parameterization for a set of unital completely positive trace preserving maps from $M_2(\C)$ to $M_2(\C)$, with the octahedron consisting of the  entanglement breaking maps in this set, cf.  \cite[Appendix B]{RuskaiKing}, \cite[Thm. 4]{Ruskai}, and \cite[Fig. 2]{RuskaiWerner}.

We also define a broader class of states that we show have a unique decomposition 
as a convex combination of product states $\rho_i\otimes\sigma_i$ that are not necessarily
pure, but with the property that each of them generates  a face of $S$ which is also a face of $K$ and 
is affinely  isomorphic  to the state space of $\B(\C^{p_i})$ for a suitable $p_i$. From 
this it  follows that the ambiguity in decompositions for a given state in this class is restricted to 
the ambiguity in decompositions for points in the state space of the matrix algebras 
$\B(\C^{p_i})$.
For a complete description of the possible decompositions of a state on $\B(\C^{p})$,
see \cite{Kirkpatrick, Schrodinger,Wootters}. 

We use our results on the facial structure of $S$ to show that every affine automorphism of the space $S$ of separable states on $\B(\C^m \otimes \C^n)$ is given by a composition of the duals of the maps that are (i) conjugation by local unitaries (i.e., unitaries of the form $U_1 \otimes U_2$)  (ii) the two partial transpose maps, or (iii) the swap automorphism that takes $A\otimes B$ to $B \otimes A$ (if $m = n$).  A consequence is a description of the affine automorphisms $\Phi$ of the state space  such that $\Phi$  preserves entanglement and separability.  

There is related work of Hulpke  et al \cite{Hulpke}. They say a linear map $L$ on $\C^m \otimes \C^n$ preserves \emph{qualitative entanglement} if $L$ sends separable (i.e., product) vectors to product vectors, and entangled vectors to entangled vectors.  They  show that a linear map $L$ preserves qualitative entanglement of vectors on $\C^m \otimes \C^n$ iff $L$ is a local operator (i.e. one of the form $L_1 \otimes L_2$), or if $L$ is a local operator composed with the swap map that takes $x\otimes y$ to $y \otimes x$.  They then show that if $L$ preserves a certain \emph{quantitative} measure of entanglement, then $L$ must be a local unitary. 

We thank Mary Beth Ruskai for helpful comments and references.

\section{Background: states on $\B(\C^n)$}

We review basic facts about states on $\B(\C^n)$,  and  develop some facts about the relationship of independence of vectors $x$ in $\C^n$ and of the corresponding vector states $\omega_x$. In the following sections we will specialize to the case of interest: separable states.

\begin{notation}
 If $x$ is a vector in any vector space, $[x]$ denotes the subspace generated by $x$.   $\C^n$ denotes the set of $n$-tuples of complex numbers viewed as an inner product space with the usual inner product (linear in the first factor).
$\B(\C^n)$ denotes the linear transformations from $ \C^n$ into itself.  For each unit vector $x \in \C^n $, we denote the associated vector state by $\omega_x $, so that $\omega_x(A) = (Ax, x)$. The convex set of states on $\B(\C^n)$ will be denoted by $K_n$.
\end{notation}

We recall that faces of the state space $K_n$ of $\B(\C^n)$ are in 1-1 correspondence with the projections in $\B(\C^n)$, and thus with the subspaces of $\C^n$ that are the ranges of these projections.  If $Q$ is a projection in $\B(\C^n)$, then the associated face $F_Q$ of $K_n$ consists of all states taking the value $1$ on $Q$.  The restriction map is an affine isomorphism from $F_Q$ onto the state space of $Q\B(\C^n )Q \cong \B(Q(\C^n))$. Thus $F_Q$ is affinely isomorphic to the state space of $\B(L)$, where $L = Q(\C^n)$. The set of extreme points of $K_n$ are the vector states, and it follows that the extreme points of $F_Q$ are the vector states $\omega _x$ with $x$ in the range of $Q$, and $F_Q$ is the convex hull of these vector states.  For background, see \cite[Chapter 4]{Alfsen-Shultz}

\begin{definition}
Recall that a convex set $C$ is said to be the \emph{direct convex sum} of a collection of convex subsets $C_1, \ldots, C_p$ if each point $\omega \in C$ can be uniquely expressed as a convex combination 
\begin{eqn}
{unique}\omega = \sum_{i \in I} \lambda_i \omega_i
\end{eqn}
where $I \subset \{1, \ldots, p\}$, $\lambda_i > 0$ for all $i \in I$, $\omega_i \in C_i$ for all $i \in I$, and $\sum_{i \in I} \lambda_i = 1$.
\end{definition}

If $C$ is a convex subset of a real linear space and is located on an affine hyperplane which does not contain the origin (as is the case for our state spaces), then it is easily seen that $C$ is the direct convex sum of convex subsets $C_1, \ldots, C_p$ iff the span of $C$ is the direct sum of the real subspaces spanned by $C_1, \ldots, C_p$.

 A finite dimensional convex set is a \emph{simplex} if it is the direct convex sum of a finite set of points. If the affine span of the points does not contain the origin, then their convex hull is a simplex iff the points are linearly independent (over $\R$).

\begin{lem}{directsumlemma}{Let $L$ be a subspace of $\C^n$ and suppose that $L$ is the direct sum of subspaces $L_1, \dots, L_p$. Let $F_1, \ldots, F_p$ be the corresponding faces of the state space of $\B(\C^n)$. Then the convex hull of $F_1, \ldots, F_p$ is the direct convex sum of those faces.  In particular, if $x_1, \ldots, x_p$ are linearly independent unit vectors, then the corresponding vector states are linearly independent and the convex hull of the corresponding vector states is a simplex.}
\end{lem}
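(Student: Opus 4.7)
The plan is to reduce everything to the criterion stated just before the lemma: a convex subset of an affine hyperplane missing the origin is a direct convex sum iff the real spans of its summands form an (internal) direct sum. So the main work is to show that the real spans of $F_1, \ldots, F_p$ are linearly independent.

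First I would identify the real span of each $F_i$. Each $F_i$ is affinely isomorphic to the state space of $\B(L_i)$, and its elements, viewed as density operators in $\B(\C^n)$, are precisely the positive trace-one operators whose range is contained in $L_i$. Scaling and taking differences, the real span of $F_i$ consists exactly of the self-adjoint operators $T$ on $\C^n$ with $\range T \subseteq L_i$.

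Next I would prove independence of these spans using the direct sum hypothesis. Suppose $T_1 + \cdots + T_p = 0$ with $T_i$ self-adjoint and $\range T_i \subseteq L_i$. For any vector $v \in \C^n$ we have $T_i v \in L_i \subseteq L$, and $\sum_i T_i v = 0$ inside the direct sum $L_1 \oplus \cdots \oplus L_p$; hence each $T_i v = 0$, so each $T_i = 0$. This gives $\Span_\R F_1 + \cdots + \Span_\R F_p = \Span_\R F_1 \oplus \cdots \oplus \Span_\R F_p$, and the criterion then yields that the convex hull of the $F_i$ is their direct convex sum.

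For the final assertion, I would apply the first part to the one-dimensional subspaces $[x_1], \ldots, [x_p]$, whose linear independence is exactly the hypothesis that $[x_1] + \cdots + [x_p]$ is a direct sum. The corresponding face $F_{[x_i]}$ is affinely isomorphic to the state space of $\B([x_i]) \cong \C$, hence is the single point $\{\omega_{x_i}\}$. A direct convex sum of singletons is by definition a simplex, and its vertices $\omega_{x_1}, \ldots, \omega_{x_p}$ are linearly independent over $\R$ because the affine hyperplane of trace-one operators misses the origin (so affine independence upgrades to linear independence); linear independence over $\C$ then follows because these operators are self-adjoint. I do not anticipate any real obstacle; the only point requiring care is to make sure that ``span'' is consistently interpreted as the real linear span when invoking the criterion from the preceding remark.
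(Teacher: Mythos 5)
Your proof is correct, but it establishes the key point --- that the real spans of $F_1,\ldots,F_p$ form a direct sum --- by a different mechanism than the paper. You pass to the density-operator picture, identify $\Span_{\R}F_i$ with the self-adjoint operators whose range lies in $L_i$, and then observe that a vanishing sum $\sum_i T_i=0$ forces each $T_iv=0$ by applying the direct-sum hypothesis to the vectors $T_iv\in L_i$; this is a clean, one-line use of the hypothesis. The paper instead stays on the functional side: given a dependence relation $\sum_{i}\gamma_i\omega_i=0$ among nonzero $\omega_i\in\Span_{\R}F_i$, it constructs for each $i$ a test operator $B_i$ that vanishes on $L_0\oplus\bigoplus_{j\ne i}L_j$ (where $L_0=L^{\perp}$) while $\omega_i(B_i)\ne0$, and evaluates the relation at each $B_k$ to kill the coefficients one at a time. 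Both routes are elementary and both conclude via the criterion, stated just before the lemma, that being a direct convex sum is equivalent to the real spans forming a direct sum; your version avoids constructing the operators $B_i$ at the cost of the (easy but necessary) verification that $F_i$ consists of the positive trace-one operators with range in $L_i$ --- and note that only the containment $\Span_{\R}F_i\subseteq\{T=T^{*}:\range T\subseteq L_i\}$ is actually needed, not the equality you assert. Your treatment of the final assertion about simplexes and linear independence of the vector states matches the paper's.
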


\begin{proof}

Let $I \subset \{0, \ldots, p\}$, and suppose $\{\omega_i \mid i \in I\}$ are nonzero functionals on $\B(\C^n)$ with $\omega_i \in \Span_{\R} F_i$ for each $i$.  
To prove independence  of $\{\omega_i \mid i \in I\}$, suppose that for scalars $\{\gamma_i\}_{i \in I}$ we have 
\begin{eqn}{dependence}\sum_{i \in I} \gamma_i \omega_i = 0.\end{eqn}
 Let $L_0$ be the orthogonal complement of $L$.  Then $\C^n$ as a linear space is the direct sum of  $L_0, L_1, \ldots, L_p$.  
 
 For each $i \in I$, let $P_i$ be the projection associated with $F_i$.  Then we can find $A_i \in P_i\B(\C^n)P_i$ such that $\omega_i(A_i) \not= 0$. Let $B_i \in \B(\C^n)$ be an operator  such that $B_i$ is zero on  $\sum_{j \not= i} L_j$, and such that $\omega_i(B_i) \not= 0$ (e.g., set $B_i= A_i$ on $L_i$).
 If $x \in L_j$ and $j \not= i$, then $\omega_x(B_i) = (B_ix,x) = 0$.  Since every state in $F_j$ is a convex combination of vector states $\omega_x$ with $x \in L_j$, then $\omega_j(B_i) = 0$ if $j \not= i$. 
 
 Now apply both sides of \eqref{dependence} to $B_k$ to conclude that $\gamma_k\omega_k(B_k) = 0$ for all $k \in I$, so $\gamma_k = 0$ for all $k \in I$.  
 Thus the set of vectors $\omega_1, \ldots, \omega_p$  is independent. We conclude that $\co(F_1, \ldots, F_p)$ is the direct convex sum of $F_1, \ldots, F_p$.

If $x_1, \ldots, x_p$ are linearly independent unit vectors, applying the result above with $F_i = \{\omega_{x_i}\}$ shows that the convex hull of the vector states $\omega_{x_i}$ is a simplex. Hence the set $\{\omega_{x_1}, \ldots, \omega_{x_p}\}$ is linearly independent.
\end{proof}

Note that the converses of the statements above are not true.  For example, while no set of more than two vectors in $\C^2$ is independent, it is easy to find a set of three linearly independent vector states on $\B(\C^2)$.

\section{Uniqueness of decompositions of separable states}

We now turn to faces of the set of separable states on $\B(\C^m\otimes\C^n)$, and to the question of uniqueness of convex decompositions of such states. We identify $\B(\C^m\otimes\C^n)$ with $\B(\C^m) \otimes \B(\C^n)$ by $(A\otimes B)(x\otimes y) = Ax \otimes By$. We denote the convex set of all states on $\B(\C^m\otimes\C^n)$ by $K$, and the convex set of all separable states by $S$.

\begin{lem}{productvectorlemma}
{
 Let $e_1, e_2, \ldots, e_p$  and $f_1, f_2, \ldots, f_p$ be unit vectors in $\C^m$ and $\C^n$ respectively.  We assume that $f_1, f_2, \ldots, f_p$ are linearly independent. If $e \in \C^m$ and $f\in \C^n$ are unit vectors such that $e\otimes f$ is in the linear span of $\{ e_i \otimes f_i \mid 1\le i \le p \}$, then there is an index $j$ such that $[e] = [e_j]$ and such that $f$ is in the span of those $f_i$ such that $[e_i] = [e_j]$. In the special case where  $[e_1], \ldots, [e_p]$  are distinct, then $[e] = [e_j]$ and $[f] = [f_j]$  for some index $j$, and $\{ e_i \otimes f_i \mid 1\le i \le p \}$ is independent.}
\end{lem}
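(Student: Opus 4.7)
The plan is to write $e\otimes f = \sum_{i=1}^p c_i\, e_i\otimes f_i$ for suitable scalars $c_i$, and then to collapse this tensor identity to a vector equation in $\C^m$ by pairing the right factor against a cleverly chosen linear functional on $\C^n$. Since $f_1,\dots,f_p$ are linearly independent, I can extend them to a basis of $\C^n$ and, for each $j\in\{1,\dots,p\}$, take $\phi_j\in(\C^n)^*$ to be the dual functional, so that $\phi_j(f_i)=\delta_{ij}$. Applying $\mathrm{id}_{\C^m}\otimes\phi_j$ to the expansion of $e\otimes f$ yields the single key identity
\begin{equation*}
\phi_j(f)\, e = c_j\, e_j.
\end{equation*}

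From this identity, whenever $c_j\neq 0$ the unit vector $e_j$ is a nonzero scalar multiple of $e$, hence $[e_j]=[e]$. Let $J=\{j : c_j\neq 0\}$. Since $e\otimes f\neq 0$, $J$ is nonempty, so any $j\in J$ furnishes an index with $[e]=[e_j]$. Reducing the expansion of $e\otimes f$ to the terms indexed by $J$, writing $e_j=\alpha_j e$ for $j\in J$, and factoring out $e$, I obtain $f=\sum_{j\in J} c_j\alpha_j f_j$, which lies in the span of those $f_i$ with $[e_i]=[e_j]$. This gives the first assertion.

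For the special case in which $[e_1],\dots,[e_p]$ are pairwise distinct, $J$ has just one element, so $f$ lies in $[f_j]$ for that unique $j$, i.e.\ $[f]=[f_j]$. Linear independence of $\{e_i\otimes f_i\}$ follows from the very same dual-functional trick: applying $\mathrm{id}_{\C^m}\otimes\phi_j$ to a hypothetical relation $\sum_i\lambda_i\, e_i\otimes f_i = 0$ gives $\lambda_j e_j=0$, forcing $\lambda_j=0$.

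No serious obstacle is anticipated. The only mild point to check is the existence of the dual functionals $\phi_j$, which is immediate from the hypothesis that $f_1,\dots,f_p$ are linearly independent (hence extendable to a basis of $\C^n$). Once the pairing identity $\phi_j(f)e=c_je_j$ is in hand, everything else is bookkeeping.
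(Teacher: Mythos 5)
Your proof is correct and follows essentially the same route as the paper: both extend $f_1,\dots,f_p$ to a basis of $\C^n$, apply the slice maps $\mathrm{id}_{\C^m}\otimes\phi_j$ to the expansion of $e\otimes f$, and read off $\phi_j(f)\,e=c_j e_j$ to locate the index $j$ with $[e]=[e_j]$ and to pin down $f$. The only (harmless) divergence is in the independence claim, where you apply the same slice-map trick directly to a relation $\sum_i\lambda_i\,e_i\otimes f_i=0$ --- which in fact proves independence without assuming the $[e_i]$ are distinct --- whereas the paper deduces it by contradiction from the first part of the lemma.
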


\begin{proof}   Extend $f_1 ,\ldots, f_p$ to a basis $f_1, \ldots, f_n$ of $\C^n$, and let $\widehat f_1, \ldots, \widehat f_n$ be the dual basis.  For $1 \le k \le n$, let $T_k : \C^m \otimes \C^n \to \C^m$ be the linear map such that $T_k(x\otimes y) = \widehat f_k(y) x$ for $x \in \C^m$, $y \in \C^n$. 

Suppose that the product vector $e \otimes f$ is a linear combination 
\begin{equation} e\otimes f = \sum_{i=1}^p \alpha_i e_i \otimes f_i.\label{comb}
\end{equation}
 For $j > p$, applying $T_j$ to both sides of \eqref{comb} gives  $\widehat f_j(f)e = 0$, so $\widehat f_j(f) = 0$ for all such $j$. 
Now if $1 \le j \le p$, applying  $T_j$ to both sides of \eqref{comb} gives
\begin{equation}
 \widehat f_j(f) e = 
\alpha_j e_j.
\label{tj}\end{equation} 
 Since $\widehat f_j(f) $ can't be zero for all $j$, then $e$ is a multiple of some $e_j$. Fix such an index $j$. If $1 \le i \le p$ and  $[e_i] \not= [e_j]$, then $e_i$ can't be a multiple of $e$, so   $\widehat f_i(f) e = \alpha_i e_i$  implies  $\alpha_i = 0$, and then also $\widehat f_i(f) = 0$.  We have shown that $\widehat f_i(f) = 0$ if $i > p$, or if $i \le p$ and $[e_i] \not= [e_j]$. It follows that $f$ is in the linear span of those $f_i$ such that $[e_i] = [e_j]$.

If it also happens that  $[e_1], \ldots, [e_p]$  are distinct, and $[e] = [e_j]$, then  $[f] = [f_j]$.  Suppose now that $\sum_i \alpha_i e_i \otimes f_i = 0$.  If $\alpha_k \not= 0$, then $e_k \otimes f_k$ is a linear combination of $\{e_i \otimes f_i \mid i \not= k\}$.  Thus by the conclusion just reached, we must have $[e_k] =[e_i]$ for some $i \not = k$, contrary to the hypothesis that  $[e_1], \ldots, [e_p]$  are distinct.  We conclude that $\alpha_k = 0$ for all $k$, and we have shown that  $\{ e_i \otimes f_i \mid 1\le i \le p \}$ is independent.
\end{proof}

\begin{lem}{faceofKcase}{Let $e_1, \ldots, e_p \in \C^m$ and $f_1, \ldots, f_p \in \C^n$ be unit vectors.  If $[e_1] = [e_2] = \ldots = [e_p]$, then the face $F$ of $S$ generated by the states $\{\omega_{e_i\otimes f_i} \mid 1 \le i \le p\}$ is also a face of $K$, and this face of $K$ is associated with the subspace $L = e_1 \otimes \Span\{f_1, \ldots, f_p\}$ of $\C^m \otimes \C^n$, and $F$ is affinely isomorphic to the state space of $\B(L)$.}
\end{lem}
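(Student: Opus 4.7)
The plan is to identify the face $F$ with the face $F_Q$ of $K$ corresponding to the projection $Q$ onto $L$. Since the hypothesis $[e_1] = \cdots = [e_p]$ forces each $e_i$ to be a unit scalar multiple of $e_1$, we have $\omega_{e_i \otimes f_i} = \omega_{e_1 \otimes f_i}$ for every $i$, so the generating set can be rewritten as $\{\omega_{e_1 \otimes f_i} \mid 1 \le i \le p\}$. By the background review at the start of the section, $F_Q$ is a face of $K$ affinely isomorphic to the state space of $\B(L)$, with extreme points the vector states $\omega_x$ for unit vectors $x \in L$.

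The first key step is to verify that $F_Q \subseteq S$. Every unit vector in $L = e_1 \otimes \Span\{f_1, \ldots, f_p\}$ is a product vector of the form $e_1 \otimes f'$, so each extreme point of $F_Q$ is a pure product state; since $F_Q$ is the convex hull of its extreme points, $F_Q$ consists entirely of separable states. Because a face of $K$ lying inside $S$ is automatically a face of $S$---any convex decomposition in $S$ is \emph{a fortiori} one in $K$---it follows that $F_Q$ is a face of $S$ containing each $\omega_{e_1 \otimes f_i}$, and hence $F \subseteq F_Q$.

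For the reverse inclusion, consider the centroid $\omega_* = \frac{1}{p}\sum_{i=1}^{p}\omega_{e_1 \otimes f_i}$, which lies in $F$. Its density matrix is the average of the rank-one projections onto the lines $[e_1 \otimes f_i]$, so its range equals $\Span\{e_1 \otimes f_i \mid 1 \le i \le p\} = L$. Under the identification of $F_Q$ with the state space of $\B(L)$, $\omega_*$ therefore corresponds to a faithful state, i.e., a relative interior point of $F_Q$. A standard facial argument now yields $F_Q \subseteq F$: given any $\omega \in F_Q$, one may write $\omega_* = \lambda\omega + (1-\lambda)\omega'$ for some $\omega' \in F_Q$ and $\lambda \in (0,1]$, and use that $F$ is a face of $S$ containing $\omega_*$ together with $\omega,\omega' \in F_Q \subseteq S$ to conclude $\omega \in F$. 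The affine isomorphism of $F$ onto the state space of $\B(L)$ then comes for free via the identification $F = F_Q$.

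The argument is largely routine once one notices that every unit vector in $L$ is already a product vector; the only point requiring real care is showing that $\omega_*$ lies in the \emph{relative interior} of $F_Q$, which reduces to the observation that the density matrix of $\omega_*$ has range precisely $L$.
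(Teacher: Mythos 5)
Your proposal is correct and follows essentially the same route as the paper: both identify $F$ with the face of $K$ associated with $L$, with the key observation that every unit vector in $L$ is a product vector, so that this face of $K$ lies in $S$ and is therefore a face of $S$. The only minor difference is in the reverse inclusion, where you use that the centroid $\omega_*$ is a relative interior point of $F_Q$, while the paper instead invokes the one-step characterization of the face generated by a convex set; both are standard and equally valid.
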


\begin{proof} 
Let $G$ be the face of $K$ which is associated with the subspace $L$ of $\C^m \otimes C^n$.  By assumption each $e_i$ is a multiple of $e_1$, so that
$$L = \Span\{e_1\otimes f_i \mid 1 \le i \le p\} = \Span\{e_i\otimes f_i \mid 1 \le i \le p\}.$$
Hence $G$ is the face of $K$ generated by $\{\omega_{e_i \otimes f_i} \mid 1 \le i \le p\}$.

We would like to show $G = F$. For brevity we denote the convex hull of the set $\{\omega_{e_i \otimes f_i} \mid 1 \le i \le p\}$ by $C$, and observe that $G$ and $F$ are the faces of $K$ and $S$ respectively generated by $C$.  It follows easily from the definition of a face that the face generated by the convex set $C$ in either one of the two convex sets $S$ or $K$ consists of all points $\rho$ in $S$ or $K$ respectively which satisfy an equation
\begin{equation}\label{rho}
\omega = \lambda \rho + (1-\lambda)\sigma
\end{equation}
where $0 < \lambda < 1$, $\omega \in C$, and where $\sigma$ is in $S$ or $K$ respectively.  It follows that $F = \face_S(C) \subset \face_K(C) = G$.

Since each vector in $L$ is a product vector, the extreme points of $G$ are pure product states, so $G \subset S$.  If $\rho$ is in the face $G$ of $K$ generated by $C$, then we can find  $\sigma \in K$ and $\omega \in C$  such that  \eqref{rho} holds. Then $\sigma$ is also in $G \subset S$, so both $\rho$ and $\sigma $ are in $S$. Hence $\rho$ is in the face $F$ of $S$ generated by $C$. Thus $G \subset F$, and so $F= G$ follows.

\end{proof}

So far we have considered collections of product vectors $\{e_i \otimes f_i\}$ with $\{f_1, \ldots, f_p\}$ linearly independent. In Lemma \ref{faceofKcase} we have described the face $F$ of $S$ generated these states  in the special case where all of the $e_i$ are multiples of each other. In this case $F$ is also a face of $K$. 

 We now remove the restriction that all of the one dimensional subspaces $[e_i]$ coincide.  We are going to partition the set of vectors $e_i \otimes f_i$ into subsets for which these subspaces coincide, and apply Lemma \ref{faceofKcase} to each such subset.  For simplicity of notation, we renumber the vectors in the fashion we now describe.
 
\begin{thm}{thm3}{
Let $e_1, e_2, \ldots, e_p$  and $f_1, f_2, \ldots, f_p$ be unit vectors in $\C^m$ and $\C^n$ respectively, and with $f_1, \ldots, f_p$ linearly independent.  We assume that the vectors are ordered so that $[e_1] , \ldots, [e_q]$ are distinct, and so that  for $i > q$ each $[e_i]$ equals one of $[e_1], \ldots, [e_q]$.
 For $1 \le i \le q$, let $F_i$ be the face of $S$ generated by the states $\{\omega_{e_j \otimes f_j} \mid [e_j] = [e_i]\} \text{ and $1 \le j \le p$}\}$.  Then each $F_i$ is also a face of $K$, and the face $F$ of $S$ generated by $\{\omega_{e_i \otimes f_i} \mid 1\le i \le p \}$ is the direct convex sum of $F_1, \ldots, F_q$.  Moreover, each $F_i$ is affinely isomorphic to the state space of $\B(L_i)$, where $L_i = e_i\otimes \Span\{f_j \mid [e_i] = [e_j]\}$.
In the special case when  $[e_1], \ldots, [e_p]$ are distinct, then $F$ is the convex hull of $\{\omega_{e_i \otimes f_i}\mid 1 \le i \le p\}$, and $F$ is a simplex. }
\end{thm}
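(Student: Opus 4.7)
The strategy is to partition the given product vectors by the one-dimensional subspaces $[e_i]$, apply Lemma \ref{faceofKcase} to each class, and then glue the resulting faces together via Lemma \ref{directsumlemma}. For each $i$ with $1 \le i \le q$ put $J_i = \{\,j : 1 \le j \le p,\ [e_j] = [e_i]\,\}$. The vectors $\{f_j : j \in J_i\}$ form a subset of the independent family $\{f_1,\dots,f_p\}$, hence are themselves independent, and all $e_j$ with $j \in J_i$ are scalar multiples of $e_i$. So Lemma \ref{faceofKcase} applies to each class and delivers the first two conclusions of the theorem for each $F_i$: it is a face of both $K$ and $S$, is associated with $L_i = e_i \otimes \Span\{f_j : j \in J_i\}$, and is affinely isomorphic to the state space of $\B(L_i)$.

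Next I would show $L := L_1 + \cdots + L_q = L_1 \oplus \cdots \oplus L_q$. Since every $e_j$ with $j \in J_i$ is a nonzero multiple of $e_i$, $L_i$ is spanned by $\{e_j \otimes f_j : j \in J_i\}$, so $L$ is spanned by $\{e_j \otimes f_j : 1 \le j \le p\}$. The latter family is linearly independent: applying $\ell \otimes \mathrm{id}$ to a relation $\sum_j \beta_j e_j \otimes f_j = 0$ for an arbitrary $\ell \in (\C^m)^*$ and using independence of the $f_j$ forces $\beta_j \ell(e_j) = 0$ for every $j$ and every $\ell$, hence $\beta_j = 0$. The direct-sum claim follows by writing any relation $\sum_i e_i \otimes y_i = 0$ with $y_i \in \Span\{f_j : j \in J_i\}$ in the basis $\{e_j \otimes f_j\}$ and reading off the coefficients. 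Lemma \ref{directsumlemma} then gives that $\co(F_1,\dots,F_q)$ is the direct convex sum of $F_1,\dots,F_q$.

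The main work is to identify $F$ with $\co(F_1,\dots,F_q)$. Let $G$ be the face of $K$ associated with $L$. On the one hand $F_i \subset F$ for each $i$ (since $F$ is a face of $S$ containing a set of generators of $F_i$), so $\co(F_1,\dots,F_q) \subset F$; on the other hand $F_i \subset G$, so $\co(F_1,\dots,F_q) \subset G \cap S$. Since $G \cap S$ is a face of $S$ containing all the states $\omega_{e_i \otimes f_i}$, we also get $F \subset G \cap S$. To close the loop I would take $\rho \in G \cap S$ and decompose it as $\rho = \sum_k \mu_k \omega_{a_k \otimes b_k}$ into pure product states; since $\range \rho \subset L$, each vector $a_k \otimes b_k$ lies in $L = \Span\{e_j \otimes f_j\}$, and Lemma \ref{productvectorlemma} (applied with the family $\{e_j \otimes f_j : 1 \le j \le p\}$, whose second factors are independent) forces $[a_k] = [e_{i(k)}]$ for some index $i(k)$ and $b_k \in \Span\{f_j : [e_j] = [e_{i(k)}]\}$. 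Thus $a_k \otimes b_k \in L_{i(k)}$, which places $\omega_{a_k \otimes b_k} \in F_{i(k)}$ and hence $\rho \in \co(F_1,\dots,F_q)$. Combining the inclusions yields $F = \co(F_1,\dots,F_q) = G \cap S$.

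For the final assertion, in the case where $[e_1],\dots,[e_p]$ are distinct we have $q = p$, each $J_i = \{i\}$, and each $L_i = [e_i \otimes f_i]$ is one-dimensional, so $F_i = \{\omega_{e_i \otimes f_i}\}$. The direct convex sum is then the convex hull of these $p$ points, which is a simplex by the ``in particular'' clause of Lemma \ref{directsumlemma}. The step I expect to be the main obstacle is the identification $F = \co(F_1,\dots,F_q)$, because although each $F_i$ is a face of both $K$ and $S$, the convex hull $\co(F_1,\dots,F_q)$ is not obviously a face of either set; the trick is to sandwich it between $F$ and $G \cap S$ and then use Lemma \ref{productvectorlemma} to rule out any extra separable states in $G$.
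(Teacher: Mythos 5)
Your proposal is correct and follows essentially the same route as the paper: Lemma \ref{faceofKcase} applied to each class $J_i$, independence of the subspaces $L_i$ feeding into Lemma \ref{directsumlemma}, and Lemma \ref{productvectorlemma} to show every pure product state in the relevant span lands in some $F_i$. The only (harmless) variations are that you prove independence of $\{e_j\otimes f_j\}$ directly with functionals $\ell\otimes\mathrm{id}$ rather than via Lemma \ref{productvectorlemma} applied to representatives $e_i\otimes g_i$, and you close the identification $F=\co(F_1,\dots,F_q)$ by decomposing an arbitrary separable state of $G\cap S$ instead of arguing through the extreme points of $F$, which incidentally yields the slightly sharper statement $F=G\cap S$.
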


\begin{proof}   By Lemma \ref{faceofKcase}, the face $F_i$ of $S$ is equal to the face of $K$ generated by $\{\omega_{e_j \otimes f_j}\mid [e_j] = [e_i]\}$, and is affinely isomorphic to the state space of $\B(L_i)$.  

We will show $L_1, \ldots, L_q$ are independent (i.e., that $L_1 + L_2 + 
\cdots L_q$ is a vector space direct sum).  For $1\le i \le q$ let $e_i \otimes g_i$ be a nonzero vector in $L_i$.  For $i \not= j$, $g_i$ and $g_j$ are linear combinations of disjoint subsets of $f_1, f_2, \ldots, f_p$, so by independence of $f_1, f_2, \ldots, f_p$, the subset $\{g_1, \ldots, g_q\}$ is independent.  Thus by Lemma \ref{productvectorlemma}, $\{e_1\otimes g_1, \ldots, e_p \otimes g_p\}$ is independent, and hence
 the subspaces $L_1, \ldots, L_q$ are independent. Hence by Lemma \ref{directsumlemma}, the convex hull of the faces $F_i$ is a direct convex sum of those faces.
 
 Finally, we need to show that this convex hull coincides with the face $F$ of $S$.  Extreme points of $F$ are extreme points of $S$, so are pure product states.  Suppose that $\omega_{x\otimes y}$ is a pure product state in $F$. Then $\omega_{x\otimes y}$ is in the face of $K$ generated by $\{\omega_{e_i \otimes f_i}\mid 1 \le i \le p\}$, so $x\otimes y$ is in $\Span \{e_i \otimes f_i \mid 1 \le i \le p\}$. By Lemma \ref{productvectorlemma}, $[x] = [e_j]$ for some $j$, and $y \in \Span\{y_i \mid [e_i] = [e_j]\}$. Hence $\omega_{x\otimes y} \in F_j$. Thus each extreme point of $F$ is in some $F_j$, so $F$ is contained in the convex hull of $\{F_i \mid 1 \le i \le q\}$. Evidently $F$ contains every $F_j$, so this convex hull equals $F$.\end{proof}

In Theorem \ref{thm3} we showed that the face $F$ is the direct convex sum of faces that are affinely isomorphic to state spaces of full matrix algebras.  Convex sets of this type were studied by Vershik (in both finite and infinite dimensions), who called them \emph{block simplexes} \cite{Vershik}. Other examples are provided by state spaces of any finite dimensional C*-algebra.  Our Theorem \ref{thm3} provides new examples of such block simplexes.

\begin{cor}{cor3.9}
{Let $e_1, e_2, \ldots, e_p$  and $f_1, f_2, \ldots, f_p$ be unit vectors in $\C^m$ and $\C^n$ respectively.  We assume that $[e_i] \not= [e_j]$ for $ i \not= j$, and that $f_1, f_2, \ldots, f_p$ are linearly independent. If $\lambda_1, \ldots, \lambda_k$ are nonnegative numbers with sum 1, then the separable state $\omega = \sum_i \lambda_i \omega_{e_i\otimes f_i}$ has a unique representation as a convex combination of pure product states.
}
\end{cor}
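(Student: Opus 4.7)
The plan is to reduce the corollary directly to Theorem \ref{thm3}, using only the standard facial structure of convex sets. By the final sentence of Theorem \ref{thm3}, under the hypotheses that $[e_1], \ldots, [e_p]$ are distinct and $f_1, \ldots, f_p$ are linearly independent, the face $F$ of $S$ generated by the pure product states $\omega_{e_i \otimes f_i}$ is a simplex whose vertices are exactly those $p$ states. Since $\omega = \sum_i \lambda_i \omega_{e_i \otimes f_i}$ is a convex combination of these vertices, $\omega$ lies in $F$.

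Next I would consider an arbitrary competing convex decomposition $\omega = \sum_j \mu_j \omega_{x_j \otimes y_j}$ into pure product states, with $\mu_j > 0$. Each $\omega_{x_j \otimes y_j}$ is an extreme point of $S$. Invoking the standard property of faces (if a point of a face $F$ of a convex set $S$ is written as a convex combination of extreme points of $S$, then all of those extreme points must themselves lie in $F$), I conclude that every $\omega_{x_j \otimes y_j}$ belongs to $F$. Since $F$ is a face of $S$, each $\omega_{x_j \otimes y_j}$ is also extreme in $F$, hence is a vertex of the simplex $F$, i.e.\ equals some $\omega_{e_i \otimes f_i}$.

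Finally, I would appeal to the uniqueness of convex coordinates at the vertices of a simplex: the $\omega_{e_i \otimes f_i}$ are affinely independent (in fact linearly independent, by the ``simplex'' conclusion of Theorem \ref{thm3} together with Lemma \ref{directsumlemma}), so the coefficients $\mu_j$ must match the $\lambda_i$ term by term after relabeling. This gives uniqueness of the decomposition of $\omega$ into pure product states.

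There is essentially no hard step here: the substance was already carried out in Theorem \ref{thm3} and Lemma \ref{productvectorlemma}. The only point requiring a moment of care is the facial property invoked in the second paragraph, which is a routine consequence of the definition of a face (if $\omega = \mu_j \omega_{x_j \otimes y_j} + (1-\mu_j)\sigma$ with $\omega \in F$ and $0<\mu_j<1$, then both $\omega_{x_j \otimes y_j}$ and $\sigma$ lie in $F$).
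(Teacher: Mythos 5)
Your proof is correct and follows essentially the same route as the paper: both arguments place every term of a competing decomposition inside the simplex $F$ furnished by Theorem \ref{thm3}, identify those terms as extreme points of $F$ (hence among the $\omega_{e_i\otimes f_i}$), and conclude by the uniqueness of barycentric coordinates in a finite-dimensional simplex. The only cosmetic difference is that the paper works with the face generated by $\omega$ itself rather than the face generated by all the $\omega_{e_i\otimes f_i}$; this changes nothing in substance.
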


\begin{proof} Suppose $\omega$ equals the convex combination $\sum_j \gamma_j \tau_j$ where each $\tau_j$ is a pure product state.  Then each $\tau_j$ is in the face $F$ of $S$ generated by $\omega$. 
 By Theorem \ref{thm3}, $F$ is a simplex, and the extreme points of $F$ are all of the form $\omega_{e_i \otimes f_i}$. Since each $\tau_j$ is a vector state, it is a pure state as well, so each state $\tau_j$ must be an extreme point of $F$, and thus must equal some $\omega_{e_i\otimes f_i}$.  Uniqueness of the representation of $\omega$  follows from the uniqueness of convex decompositions into extreme points of a (finite dimensional) simplex.

\end{proof}

\begin{definition}

A separable state $\omega$ has \emph{length} $k$ if $\omega$ can be expressed as a convex combination of $k$ pure product states and admits no decomposition into fewer than $k$ pure product states.  We denote by  $S_k$ the set of separable states of length at most $k$.
\end{definition}

\begin{definition}

A separable state $\omega$ has a \emph{unique decomposition} if it can be written as a convex combination  of pure product states in just one way   \end{definition}

 By the above result, roughly speaking decompositions of separable states  on $\B(\C^m \otimes \C^n)$ of length $\le \max(m,n)$ generically are unique.  Here's a more precise statement.

Let $k \le \max(m,n)$, and let $V_k$ be the set of states $\omega$ admitting a convex decomposition $\omega = \sum_{i=1}^k \lambda_i \omega_{e_i\otimes f_i}$, where $e_1, \ldots, e_k$ and $f_1, \ldots, f_k$ are unit vectors in $\C^m$ and $\C^n$ respectively, $0 < \lambda_i$ for $1\le i \le k$, $[e_1], \ldots, [e_k]$ are distinct, and $\{f_1, \ldots, f_k\}$ is linearly independent. 

\begin{thm}{cor4}
{Let $m,n > 1$. For a given $k \le \max(m,n)$, the states in $V_k$ have length $k$, and have unique decompositions. The set $V_k$ is open and dense in the set $S_k$ of separable states of length at most $k$. }
\end{thm}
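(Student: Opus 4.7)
The length and uniqueness claims follow directly from Corollary \ref{cor3.9}: any $\omega \in V_k$ admits the decomposition $\omega = \sum_{i=1}^k \lambda_i \omega_{e_i \otimes f_i}$ satisfying the hypotheses of that corollary with $p = k$, so this decomposition is the unique convex decomposition of $\omega$ into pure product states. In particular no decomposition uses fewer than $k$ terms, so $\omega$ has length exactly $k$.

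For density, start with an arbitrary $\omega = \sum_{i=1}^\ell \mu_i \omega_{a_i \otimes b_i} \in S_k$ with $\ell \le k$. Since $k \le \max(m,n)$, after possibly swapping the tensor factors (and correspondingly swapping the roles of the two factors in the definition of $V_k$) assume $k \le n$. The plan is then to produce approximants in two steps: if $\ell < k$, first adjoin $k-\ell$ auxiliary product states of small total weight $\epsilon$; then perturb all $k$ first-factor vectors and $k$ second-factor vectors slightly so that $[a_1], \ldots, [a_k]$ become pairwise distinct and $\{b_1, \ldots, b_k\}$ linearly independent. Both conditions cut out open dense subsets of the relevant parameter spaces (distinctness because $m > 1$, independence because $k \le n$), so arbitrarily small such perturbations exist. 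Letting $\epsilon$ and the perturbations tend to $0$ yields a sequence in $V_k$ converging to $\omega$.

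The main obstacle is openness. Suppose $\omega_n \to \omega$ with $\omega_n \in S_k$ and $\omega \in V_k$, and write the unique decomposition $\omega = \sum_{j=1}^k \lambda_j \omega_{e_j \otimes f_j}$. For each $n$ fix a decomposition of $\omega_n$ into at most $k$ pure product states and pad with zero-weighted terms to exactly $k$, obtaining $\omega_n = \sum_{i=1}^k \mu_i^{(n)} \omega_{a_i^{(n)} \otimes b_i^{(n)}}$. By compactness of the simplex and of the unit spheres in $\C^m$ and $\C^n$, pass to a subsequence along which $\mu_i^{(n)} \to \mu_i^*$, $a_i^{(n)} \to a_i^*$, $b_i^{(n)} \to b_i^*$ for each $i$, yielding $\omega = \sum_{i=1}^k \mu_i^* \omega_{a_i^* \otimes b_i^*}$. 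The $k$ target states $\omega_{e_j \otimes f_j}$ are distinct (since the $[e_j]$ are), so Corollary \ref{cor3.9} forces the distinct pure product states among $\{\omega_{a_i^* \otimes b_i^*} : \mu_i^* > 0\}$ to be exactly these $k$ targets, with matching aggregated weights. Since the limit has at most $k$ terms, the matching must be a bijection: every $\mu_i^*$ is positive, and there is a permutation $\phi$ with $[a_i^*] = [e_{\phi(i)}]$, $[b_i^*] = [f_{\phi(i)}]$, $\mu_i^* = \lambda_{\phi(i)}$. Thus $[a_1^*], \ldots, [a_k^*]$ are distinct and $\{b_1^*, \ldots, b_k^*\}$ is linearly independent; since these are open conditions on the parameters, the decomposition of $\omega_n$ lies in $V_k$ for all large $n$ along the subsequence. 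The usual subsequence-of-subsequence argument then gives $\omega_n \in V_k$ for all sufficiently large $n$, which is the desired openness.
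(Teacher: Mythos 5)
Your proposal is correct and follows essentially the same route as the paper: length and uniqueness via Corollary \ref{cor3.9}, density by small perturbations of the weights and vectors, and openness via compactness of the parameter space $I_0 \times (U_m)^k \times (U_n)^k$ together with the fact that uniqueness forces any $k$-term decomposition of a state in $V_k$ to have parameters satisfying the distinctness and independence conditions. The paper packages the openness step as ``the continuous image of the compact complement of the good parameter set is exactly $S_k \setminus V_k$, hence closed,'' while you run the equivalent sequential subsequence argument (and in fact spell out more explicitly why no ``bad'' parameter point can map into $V_k$).
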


\begin{proof} Without loss of generality, we may assume $m\le n$.  By Corollary \ref{cor3.9}, each $\omega \in V_k$ admits a unique representation as a convex combination of pure product states, and each state in $V_k$ has length $k$.  We will show that $V_k$ is open and dense in $S_k$.

 To prove density, let $\omega\in S_k$ have a convex decomposition $\omega = \sum_{i=1}^k \lambda_i\omega_{x_i \otimes y_i}$. By slightly perturbing the coefficients $\lambda_i$ if necessary, we may assume that $\lambda_i > 0$ for all $i$.
 
 Given $\epsilon > 0$, by perturbing each $x_i$ and $y_i$  if necessary, we can find a second convex combination of pure product states $\omega' = \sum_{i=1}^k \lambda_i\omega_{e_i \otimes f_i}$  with $\|\omega - \omega'\|< \epsilon$, with  $[e_1], \ldots, [e_k]$  distinct, and with $\{f_1,  \ldots, f_k\}$ independent.  (Indeed, to achieve independence we may append unit vectors $y_{k+1},\ldots, y_n$  to the vectors $y_1, \ldots, y_k$ to give the subset $\{y_1, y_2, \ldots, y_n\}$ of $\C^n$, and by small perturbations arrange that the determinant of the matrix with columns $y_1, \ldots, y_n$ is nonzero.)  Thus $V_k$ is dense in $S_k$.

 Let $I_0 = \{(\lambda_1, \lambda_2, \ldots, \lambda_k) \in [0,1]^k\mid \sum_i \lambda_i = 1\}$. Let $U_m$ be the unit sphere of $\C^m$ and $U_n$ the unit sphere of $\C^n$.  Let $X = I_0 \times (U_m)^k \times (U_n)^k$.  Define $\psi:X \to S$ by
$$\psi((\lambda_1, \ldots, \lambda_k), (x_1, \ldots, x_k), (y_1, \ldots, y_k)) = \sum_i \lambda_i \omega_{x_i \otimes y_i}.$$
Note that $\psi$ is continuous, that $X$ is compact with  respect to the product topology, and that $\psi(X) =  S_k$.

Now let $X_0$ be the set $\{((\lambda_1, \ldots, \lambda_k), (x_1, \ldots, x_k), (y_1, \ldots, y_k))\}$ of members of $X$ of such that $[x_1], \ldots, [x_k]$ are distinct, such that $\{y_1, \ldots, y_k\}$ is linearly independent, and such that $\lambda_i>0$ for $1 \le i \le k$. 
By lower semicontinuity of the rank of a matrix whose columns are $y_1, \ldots, y_k$, 
the set of elements $((\lambda_1, \ldots, \lambda_k), (x_1, \ldots, x_m), (y_1, \ldots, y_k))$ of $X$ with $\{y_1, \ldots, y_k\}$ linearly independent is open in $X$, so it is clear that $X_0$ is an open subset of $X$.  By construction, $\psi(X_0 )= V_k$.  Since $X_0$ is open in $X$, then $X\setminus X_0$ is closed and hence compact. Since  $\psi$ maps $X \setminus X_0$ onto $S_k \setminus \psi(X_0)$, then the latter is closed, so $V_k = \psi(X_0)$ is open in $S_k$. 
\end{proof}

 As remarked in the introduction,  the sets $V_k$ are open and dense in the relative topology on $S_k$, but are not open or dense in $S$ or $K$ if $mn > 1$. To see this recall that a point $\sigma$ in a convex set $C$ is an algebraic interior point if for every point $\rho$ in $C$ there is a point $\tau$ in $C$ such that $\sigma$ lies on the open line segment from $\rho$ to $\tau$.  Clearly for every algebraic interior point $\sigma$  of $S$ and every pure product state $\rho$, there is a convex decomposition of $\sigma$ that includes $\rho$ with positive weight. Since there are infinitely many pure product states, there are infinitely many convex decompositions for every algebraic interior point of $S$.

  Every nonempty subset  which is open in $S$ contains an algebraic interior point of $S$ (\cite[pp. 88-91]{SW}), so contains points with nonunique decompositions.  Thus $V_k$ is not open in $S$ or $K$.  It is not dense in $S$ or $K$, since for any $m, n$ there exists $r > 0$ such that all states $\sigma$ within a distance $r$ from  the normalized tracial state are separable, cf. \cite[Thm. 1]{Z}.  Every such state $\sigma$ is an algebraic interior point of $S$, and so fails to have a unique decomposition.

Observe that  Theorem \ref{cor4} implies that $V_k$ is also open and dense in the set of separable states of length equal to $k$.

\section{Description of convex decompositions}

Let $e_1, e_2, \ldots, e_p$  and $f_1, f_2, \ldots, f_p$ be unit vectors in $\C^m$ and $\C^n$ respectively, with $f_1, \ldots, f_p$ linearly independent. Suppose $\omega$ is a convex combination of $\{\omega_{e_i \otimes f_i} \mid 1 \le i \le p\}$.
 In this section, we will describe all convex decompositions of $\omega$ into pure product states.

Let $\omega = \sum_i \lambda_i \omega_i$ be any convex decomposition of $\omega$ into pure product states.  Then following the notation of Theorem \ref{thm3}, each $\omega_i$ is in $\face_S(\omega) \subset F$.   Since each $\omega_i$ is an extreme point of $S$, and $F$ is the direct convex sum of the faces $F_i$, then each $\omega_i$ must be in some $F_k$.  If we define $\gamma_k = \sum_{\{i \mid \omega_i \in F_k\}}\lambda_i$ and $\sigma_k = \gamma_k^{-1}\sum_{\{i \mid \omega_i \in F_k\}}\lambda_i\omega_i$, then $\omega$ has the convex decomposition 
\begin{equation}\label{decomp}
\omega = \sum_k \gamma_k \sigma_k \text{ with  $\sigma_k \in F_k$ for each $k$}.
\end{equation}
 Since $F$ is the direct convex sum of the $F_k$, the decomposition of $\omega$ in \eqref{decomp} is unique.  

All possible convex decompositions of $\omega$ into pure product states can be found by starting with the unique decomposition $\omega = \sum_k \gamma_k \sigma_k$ with $\sigma_k \in F_k$, and then decomposing each $\sigma_k$ into pure states. (Every state in $F_k$ is separable, so pure states are pure product states). Since $F_k$ is affinely isomorphic to the state space of $\B(L_k)$, unless each $\sigma_k$ is itself a pure state, this can be done in many ways, as we discussed in the introduction. The possibilities have been described in \cite{Wootters, Schrodinger, Kirkpatrick}.

A decomposition of a separable state $\omega$ as a convex combination of pure product states  
can be interpreted as a representation of $\omega$ as the barycenter of a probability measure 
on the extreme points of $S$. With this interpretation the statement above can be rephrased 
in terms of the concept of dilation of measures (as defined e.g. in \cite[p. 25]{Alfsen}). If $\omega$ is
given as above, then the probability measures on pure product states that represent $\omega$
are precisely those which are dilations of the uniquely determined probability measure  
$\mu = \sum_k \gamma_k\mu_k$ obtained from (6) with $\mu_k=\delta_{\sigma_k}$.

\section{Affine automorphisms of the space $S$ of separable states}

\begin{notation}
Fix $m, n$. We denote  the state space of $\B(\C^m)$ by $K_m$, the state space of $\B(\C^n)$ by $K_n$, and the state space of $\B(\C^m \otimes \C^n)$ by $K$ or $K_{m,n}$.  The convex set of separable states in $K$ is denoted by $S$ or $S_{m,n}$.  We will sometimes deal with  a second algebra $\B(\C^{m'} \otimes \C^{n'})$, whose state space and separable state spaces we will denote by $K'$ or $S'$ respectively.

\end{notation}

From Theorem \ref{thm3}, the face of $S$ generated by two distinct pure product states $\omega_1 \otimes \sigma_1$ and $\omega_2 \otimes \sigma_2$ is a line segment (if $\omega_1 \not= \omega_2$ and $\sigma_1 \not= \sigma_2)$ or is isomorphic to the state space of $\B(\C^2)$ and hence is a 3-ball (when $\omega_1 = \omega_2$ but $\sigma_1 \not= \sigma_2$, or  when $\sigma_1 = \sigma_2$ but $\omega_1 \not= \omega_2$). (By a 3-ball we mean a convex set affinely isomorphic to the closed unit ball of $\R^3$.  The fact that the state space of $\B(\C^2)$ is a 3-ball can be found in many places, e.g., \cite[Thm. 4.4]{Alfsen-Shultz}.)

  We define a relation $R$ on the pure product states of $K$ by $\rho \RR \tau$ if $\face_S(\rho, \tau)$ is a 3-ball. By the remarks above, $(\omega_1 \otimes \sigma_1) \RR (\omega_2 \otimes \sigma_2)$ iff ($\omega_1 = \omega_2$ but $\sigma_1 \not= \sigma_2$) or  ($\sigma_1 = \sigma_2$ but $\omega_1 \not= \omega_2$). Note that an affine isomorphism $\Phi:S\to S'$ will take faces of $S$ to faces of $S'$, and will take 3-balls to 3-balls, so for pure product states $\rho, \tau$ we have  $\rho \RR \tau$ iff $\Phi(\rho) \RR \Phi(\tau)$.

The idea of the following lemmas is to show that if $\Phi(\omega\otimes \sigma) = \phi(\omega, \sigma) \otimes \psi(\omega, \sigma)$, then $\phi$ depends only on the first argument and $\psi$ depends only on the second argument, or possibly vice versa. Although we are interested in affine automorphisms of a single space of separable states, it will be easier to establish the needed lemmas in the context of affine isomorphisms  from $S$ to $S'$.

We use the notation $\partial_eC$ for the set of extreme points of a convex set $C$. For example, $\partial_eK$ is the set of pure states on $\B(\C^m \otimes \C^n)$.

\begin{lemma}\label{foureqns}  Let $\Phi:S_{m,n}\to S_{m', n'}$ be an affine isomorphism.  Let $\omega_1$, $\omega_2 $ be distinct pure states in $K_m$ and $\sigma_1$, $\sigma_2$ distinct pure states in $K_n$.    Then the following four equations cannot hold simultaneously.
\begin{align}\label{four}
\Phi(\omega_1 \otimes \sigma_1) &= \rho_1 \otimes \tau_1\cr
\Phi(\omega_1 \otimes \sigma_2) &= \rho_1 \otimes \tau_2\cr
\Phi(\omega_2 \otimes \sigma_1) &= \rho_2 \otimes \tau_3\cr
\Phi(\omega_2 \otimes \sigma_2) &= \rho_3\otimes \tau_3\cr
\end{align}
for $\rho_1, \rho_2, \rho_3 \in \partial_e K_{m'}$ and $\tau_1, \tau_2, \tau_3 \in \partial_e K_{n'}$.
\end{lemma}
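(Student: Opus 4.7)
The plan is to exploit the $R$-preservation noted just before the lemma together with injectivity of $\Phi$. Recall that for distinct pure product states, $(\alpha_1 \otimes \beta_1) \RR (\alpha_2 \otimes \beta_2)$ if and only if exactly one of the equalities $\alpha_1 = \alpha_2$, $\beta_1 = \beta_2$ holds, and since $\Phi$ carries 3-ball faces to 3-ball faces it preserves $R$. I will use this, together with the fact that $\Phi$ is an affine isomorphism and hence injective, to derive a contradiction from the four assumed equations.

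First I would apply $R$-preservation to the four pairs among the inputs $\omega_i \otimes \sigma_j$ that differ in exactly one tensor factor. Transporting those four $R$-relations by $\Phi$ to the right-hand sides yields $\tau_1 \neq \tau_2$ (from $\rho_1 \otimes \tau_1 \RR \rho_1 \otimes \tau_2$, with equal first factors), $\rho_2 \neq \rho_3$ (from $\rho_2 \otimes \tau_3 \RR \rho_3 \otimes \tau_3$), together with the two dichotomies ``$\rho_1 = \rho_2$ or $\tau_1 = \tau_3$'' (from $\rho_1 \otimes \tau_1 \RR \rho_2 \otimes \tau_3$) and ``$\rho_1 = \rho_3$ or $\tau_2 = \tau_3$'' (from $\rho_1 \otimes \tau_2 \RR \rho_3 \otimes \tau_3$).

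A short case analysis on the four combinations of these dichotomies then closes the argument. Two sub-cases conflict directly with the inequalities just derived: $\tau_1 = \tau_3$ combined with $\tau_2 = \tau_3$ forces $\tau_1 = \tau_2$, and $\rho_1 = \rho_2$ combined with $\rho_1 = \rho_3$ forces $\rho_2 = \rho_3$. In the remaining two sub-cases the four equations collapse to either $\Phi(\omega_1 \otimes \sigma_2) = \Phi(\omega_2 \otimes \sigma_1)$ or $\Phi(\omega_1 \otimes \sigma_1) = \Phi(\omega_2 \otimes \sigma_2)$; injectivity of $\Phi$ together with the fact that a pure product state uniquely determines each of its two tensor factors then contradicts the standing hypotheses $\omega_1 \neq \omega_2$ and $\sigma_1 \neq \sigma_2$.

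The only point that is not pure bookkeeping is this last remark, that $\alpha \otimes \beta = \alpha' \otimes \beta'$ as pure product states forces $\alpha = \alpha'$ and $\beta = \beta'$; this is a standard consequence of equality of the associated rank-one projections and the uniqueness up to phase of the factors of a nonzero simple tensor, so I expect no serious obstacle. The substantive tools---preservation of 3-ball faces and hence of $R$, and injectivity of an affine isomorphism---are already in hand from the discussion preceding the lemma.
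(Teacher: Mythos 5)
Your proposal is correct and follows essentially the same route as the paper: apply preservation of the relation $R$ to the two pairs differing in the first tensor factor to get the dichotomies ``$\rho_1 = \rho_2$ or $\tau_1 = \tau_3$'' and ``$\rho_1 = \rho_3$ or $\tau_2 = \tau_3$,'' then rule out all four combinations because they would force two of the four (necessarily distinct) image states to coincide. The only cosmetic difference is that you justify two of the sub-cases via the inequalities $\tau_1 \neq \tau_2$ and $\rho_2 \neq \rho_3$ extracted from the remaining $R$-relations, whereas the paper appeals uniformly to injectivity of $\Phi$ and distinctness of the four states $\omega_i \otimes \sigma_j$.
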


\begin{proof} We assume for contradiction that all four equations hold.
Since $(\omega_1 \otimes \sigma_1) \RR (\omega_2 \otimes \sigma_1)$, then $(\rho_1 \otimes \tau_1) \RR (\rho_2 \otimes \tau_3)$. Hence
\begin{equation}\label{e1}
\rho_1 = \rho_2 \text{ or  } \tau_1 = \tau_3.
\end{equation}
Similarly
$(\omega_1 \otimes \sigma_2) \RR (\omega_2 \otimes \sigma_2)$, so $(\rho_1 \otimes \tau_2) \RR (\rho_3 \otimes \tau_3)$. Hence
\begin{equation}\label{e2}
\rho_1 = \rho_3 \text{ or  } \tau_2 = \tau_3.
\end{equation}
Since we are assuming that $\omega_1 \not= \omega_2$ and $\sigma_1 \not= \sigma_2$, the four states $\{\omega_i \otimes \sigma_j \mid 1\le i, j \le 2\}$ are distinct, so the four states on the right side of \eqref{four} must be distinct. Combining \eqref{e1} and \eqref{e2} gives four possibilities, each contradicting the fact that the states on the right side of \eqref{four} are distinct. Indeed:
\begin{align*}
(\rho_1 = \rho_2 \text{ and } \rho_1 = \rho_3) \implies & \rho_2 \otimes \tau_3 = \rho_3 \otimes \tau_3\cr
(\rho_1 = \rho_2 \text{ and } \tau_2 = \tau_3) \implies & \rho_1 \otimes \tau_2 = \rho_2 \otimes \tau_3\cr
(\tau_1 = \tau_3 \text{ and } \rho_1 = \rho_3) \implies & \rho_1 \otimes \tau_1 = \rho_3 \otimes \tau_3\cr
(\tau_1 = \tau_3 \text{ and } \tau_2 = \tau_3) \implies & \rho_1 \otimes \tau_1 = \rho_1 \otimes \tau_2.\cr
\end{align*}
We conclude that the four equations in \eqref{four} cannot hold simultaneously.
\end{proof}

\begin{definition} Recall that we identify $\B(\C^m \otimes \C^n)$ with $\B(\C^m) \otimes \B(\C^n)$. The \emph{swap isomorphism} $(\alpha_{m,n})_*: \B(\C^n \otimes \C^m)\to \B(\C^m \otimes \C^n)$ is the *-isomorphism that satisfies $(\alpha_{m,n})_*(A \otimes B) = B \otimes A$.  If operators in $\B(\C^m \otimes \C^n)$ are identified with matrices, the swap isomorphism is the same as the ``canonical shuffle" discussed in \cite[Chapter 8]{Paulsen}. The dual map $\alpha_{m,n}$ is an affine isomorphism from the state space of  $\B(\C^m \otimes \C^n)$ to the state space of $ \B(\C^n \otimes \C^m)$, with $\alpha_{m,n}(\omega \otimes \sigma) = \sigma \otimes \omega$.  This restricts to an affine isomorphism from $S_{m,n}$ to $S_{n,m}$, which we also refer to as the swap isomorphism.  If $m = n$, then $(\alpha_{m,m})_*$ is a *-automorphism of $ \B(\C^m \otimes \C^m)$, $\alpha_{m,m}$ is an affine automorphism of the state space $K$, and restricts to an affine automorphism of the space $S$ of separable states.
\end{definition}

\begin{lemma}\label{factor}  Let $\Phi:S_{m,n}\to S_{m', n'}$ be an affine isomorphism.  At least one of the following two possibilities occurs:
\begin{enumerate}
\item[(i)] For every $\omega \in \partial_eK_m$ there exists $\rho \in \partial_e K_{m'}$ such that 
$\Phi(\omega \otimes K_n) = \rho \otimes K_{n'}$, and for every $\sigma \in \partial_e K_n$ there exists $\tau \in \partial_e K_{n'}$ such that $\Phi(K_m\otimes \sigma) = K_{m'} \otimes \tau$.
\item[(ii)] For each $\omega \in \partial_eK_m$ there exists $\tau \in \partial_e K_{n'}$ such that
$\Phi(\omega \otimes K_n) = K_{m'} \otimes \tau$, and  for every $\sigma \in \partial_e K_n$ there exists $\rho \in \partial_e K_{m'}$ such that $\Phi(K_m\otimes \sigma) = \rho \otimes K_{n'}$.
\end{enumerate}
If (i) occurs, then $m = m'$ and $n = n'$.  If (ii) occurs, then $m = n'$ and $n = m'$.
\end{lemma}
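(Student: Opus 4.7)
My plan is to exploit the classification of the $R$-relation recalled just before the lemma, together with Lemma~\ref{foureqns}, to force $\Phi$ into the form $\Phi(\omega \otimes \sigma) = f(\omega) \otimes h(\sigma)$ or $\Phi(\omega \otimes \sigma) = h(\sigma) \otimes f(\omega)$ for appropriate bijections $f, h$ between the pure state spaces, and then transfer this to whole faces by Krein--Milman. Since an affine isomorphism carries extreme points to extreme points, faces to faces, and hence 3-balls to 3-balls, $\Phi$ sends pure product states to pure product states and preserves the relation $R$.

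For each pure $\omega \in K_m$, the $R$-characterization gives that for every pair of distinct pure $\sigma_1, \sigma_2 \in K_n$ the images $\Phi(\omega \otimes \sigma_j) = \rho_j \otimes \tau_j$ satisfy exactly one of $\rho_1 = \rho_2$ (``type L at $\omega$'') or $\tau_1 = \tau_2$ (``type R at $\omega$''). To rule out that a single $\omega$ mixes these behaviors, I would pick three distinct pure $\sigma_1, \sigma_2, \sigma_3$ and assume type L on $(\sigma_1, \sigma_2)$ and type R on $(\sigma_1, \sigma_3)$; the three images read $\rho \otimes \tau_1, \rho \otimes \tau_2, \rho' \otimes \tau_1$ with $\rho \ne \rho'$ and $\tau_1 \ne \tau_2$, so the pair $\Phi(\omega \otimes \sigma_2), \Phi(\omega \otimes \sigma_3)$ shares neither factor, contradicting $(\omega \otimes \sigma_2) \RR (\omega \otimes \sigma_3)$. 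Next I would show the type is constant in $\omega$: if $\omega_1$ were type L and $\omega_2 \ne \omega_1$ type R, then for any pure $\sigma_1 \ne \sigma_2$ the four images form precisely the configuration
\begin{align*}
\Phi(\omega_1 \otimes \sigma_1) &= \rho_1 \otimes \tau_1, &
\Phi(\omega_1 \otimes \sigma_2) &= \rho_1 \otimes \tau_2, \\
\Phi(\omega_2 \otimes \sigma_1) &= \rho_2 \otimes \tau_3, &
\Phi(\omega_2 \otimes \sigma_2) &= \rho_3 \otimes \tau_3,
\end{align*}
forbidden by Lemma~\ref{foureqns}.

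Assume the common type is L; case R is symmetric. Write $\Phi(\omega \otimes \sigma) = f(\omega) \otimes g(\omega, \sigma)$. Applying the $R$-relation to $\omega_1 \otimes \sigma_1$, $\omega_2 \otimes \sigma_2$ with $\omega_1 \ne \omega_2$ and $\sigma_1 \ne \sigma_2$ (a non-$R$-related pair) forces their images to fail both factor-equalities, so $f$ is injective on $\partial_e K_m$; then the $R$-relation applied to $\omega_1 \otimes \sigma$, $\omega_2 \otimes \sigma$ forces $g(\omega_1, \sigma) = g(\omega_2, \sigma)$, so $g(\omega, \sigma) = h(\sigma)$ for some function $h$. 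Bijectivity of $\Phi$ together with the unique factorization of pure product states makes $f$ and $h$ surject onto $\partial_e K_{m'}$ and $\partial_e K_{n'}$. Hence by Krein--Milman
$$
\Phi(\omega \otimes K_n) \;=\; \co\{f(\omega) \otimes h(\sigma) : \sigma \in \partial_e K_n\} \;=\; f(\omega) \otimes K_{n'},
$$
and symmetrically $\Phi(K_m \otimes \sigma) = K_{m'} \otimes h(\sigma)$, which is option (i); the restricted affine isomorphism $\omega \otimes K_n \to f(\omega) \otimes K_{n'}$ identifies the state spaces of $\B(\C^n)$ and $\B(\C^{n'})$, so a dimension count gives $n = n'$ and likewise $m = m'$. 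Option (ii) is the same argument with the tensor factors on the right interchanged, yielding $m = n'$ and $n = m'$. The chief technical obstacle is the three-point argument establishing a single consistent type at each $\omega$; once that and Lemma~\ref{foureqns} are in hand, the remainder is Krein--Milman and dimension bookkeeping.
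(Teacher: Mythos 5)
Your proposal is correct and follows essentially the same route as the paper: both rest on $\Phi$ preserving the relation $\RR$, a three-point argument to pin down a single ``type'' at each fixed $\omega$, and Lemma~\ref{foureqns} to make that type independent of $\omega$. The only difference is organizational: where the paper runs the symmetric argument in $\sigma$, uses $\Phi^{-1}$ to upgrade the inclusion $\Phi(\omega\otimes K_n)\subset\rho\otimes K_{n'}$ to equality, and then excludes the two cross-combinations by a disjointness argument, you extract the full factorization $\Phi(\omega\otimes\sigma)=f(\omega)\otimes h(\sigma)$ (anticipating part of Theorem~\ref{factoring}) and read off both halves of (i) at once via surjectivity of $f$ and $h$ and Krein--Milman --- a harmless, slightly more economical bookkeeping of the same ideas.
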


\begin{proof}
 For fixed $\omega\in \partial_eK_m$  and distinct $\sigma_1, \sigma_2 \in \partial_e K_n$ we have $(\omega\otimes \sigma_1) \RR (\omega \otimes \sigma_2)$,  so $\Phi(\omega\otimes \sigma_1) \RR \Phi(\omega \otimes \sigma_2)$.  Thus either there exist $\rho_1 \in \partial_eK_{m'}$ and distinct $\tau_1, \tau_2 \in \partial_e K_{n'}$ such that
\begin{equation}\label{eq3}
\Phi(\omega\otimes \sigma_i) = \rho_1 \otimes \tau_i \text{ for $i = 1, 2$},
\end{equation}
or  there exist distinct $\rho_1, \rho_2 \in \partial_e K_{m'}$ and $\tau_3 \in \partial K_{n'}$ such that
\begin{equation}\label{eq4}
\Phi(\omega\otimes \sigma_i) = \rho_i \otimes \tau_3 \text{ for $i = 1, 2$}.
\end{equation}
We will show that \eqref{eq3} implies (i), and \eqref{eq4} implies (ii).

Suppose that \eqref{eq3} holds. Let $\sigma \in \partial_e K_n$ with $\sigma \not= \sigma_1$ and $\sigma\not=\sigma_2$, and let $\Phi(\omega\otimes \sigma) = \rho \otimes \tau$.   Since $(\omega\otimes \sigma) \RR (\omega\otimes \sigma_i)$ for $i = 1, 2$, then $(\rho\otimes \tau) \RR (\rho_1 \otimes \tau_i)$ for $i = 1, 2$.  Hence ($\rho = \rho_1$ or $\tau = \tau_1$) and ($\rho = \rho_1$ or $\tau = \tau_2$). Since $\tau_1 \not= \tau_2$, then $\rho = \rho_1$.  It follows that $\Phi(\omega \otimes K_n) \subset \rho_1 \otimes K_{n'}$.  Thus
\begin{equation}
\Phi(\omega\otimes \sigma_i) = \rho_1 \otimes \tau_i \text{ for $i = 1, 2$}\implies \Phi(\omega \otimes K_n)\subset \rho_1\otimes K_{n'}.\label{imp}
\end{equation}
Now \eqref{eq3} also implies 
\begin{equation}\label{eq5}
\Phi^{-1}(\rho_1 \otimes \tau_i) = \omega\otimes \sigma_i \text{ for $i = 1, 2$}.
\end{equation}
If \eqref{eq3} holds (and hence also \eqref{eq5}, then applying the implication \eqref{imp} to \eqref{eq5} with $\Phi^{-1}$ in place of $\Phi$  shows 
$ \Phi^{-1}(\rho_1\otimes K_{n'})
 \subset \omega \otimes K_n$, so by \eqref{imp}  equality holds.  Hence we have shown
\begin{equation}
\Phi(\omega\otimes \sigma_i) = \rho_1 \otimes \tau_i \text{ for $i = 1, 2$}\implies \Phi(\omega \otimes K_n)= \rho_1\otimes K_{n'}.\label{imp3}
\end{equation}

Now suppose instead that \eqref{eq4} holds.  Let $\alpha_{m', n'}$ be the swap affine isomorphism defined above, so that $\alpha_{m', n'}: S_{m', n'} \to S_{n', m'}$. Then
\begin{equation}
(\alpha_{m', n'}\circ\Phi)(\omega\otimes \sigma_i) = \alpha_{m', n'}(\rho_i \otimes \tau_3) = \tau_3 \otimes \rho_i \text{ for $i = 1, 2$}.
\end{equation}
By the implication \eqref{imp3} applied to $\alpha_{m', n'} \circ \Phi$ we conclude that 
$$(\alpha_{m', n'} \circ \Phi)(\omega \otimes K_n)= \tau_3\otimes K_{m'},$$ so $$\Phi(\omega \otimes K_n)= \alpha_{m', n'}^{-1}(\tau_3\otimes K_{m'}) = K_{m'} \otimes \tau_3.$$ Thus we have proven the implication 
\begin{equation}
\Phi(\omega\otimes \sigma_i) = \rho_i \otimes \tau_3 \text{ for $i = 1, 2$} \implies \Phi(\omega \otimes K_n)= K_{m'} \otimes \tau_3.\label{eq16}
\end{equation}

By Lemma \ref{foureqns} and the implications \eqref{imp3} and \eqref{eq16}, either \eqref{eq3} must hold for all $\omega \in \partial_eK_m$ or \eqref{eq4} must hold for all $\omega \in \partial_eK_m$.  We conclude that either
\begin{equation}\label{choice1}
\forall  \omega \in \partial_eK_m \quad\exists \rho \in \partial_e K_{m'} \text{ such that }
\Phi(\omega \otimes K_n) = \rho \otimes K_{n'}
\end{equation}
or
\begin{equation}\label{choice2}
\forall  \omega \in \partial_eK_m \quad\exists \tau \in \partial_e K_{n'} \text{ such that }
\Phi(\omega \otimes K_n) = K_{m'} \otimes \tau.
\end{equation}
Similarly, either
\begin{equation}\label{choice3}
\forall  \sigma \in \partial_eK_n \quad\exists \tau' \in \partial_e K_{n'} \text{ such that }
\Phi(K_m\otimes \sigma) = K_{m'}\otimes \tau'
\end{equation}
or
\begin{equation}\label{choice4}
\forall  \sigma \in \partial_eK_n \quad\exists \rho' \in \partial_e K_{m'} \text{ such that }
\Phi( K_m\otimes \sigma) =  \rho'\otimes K_{n'}. 
\end{equation}
Suppose that \eqref{choice1} and \eqref{choice4} both held.  For $\omega \in K_m$ and $\sigma \in K_n$ note that $\omega \otimes \sigma$ is in both $\omega \otimes K_n$ and $K_m \otimes \sigma$, so $\rho \otimes K_{n'}$ and $\rho' \otimes K_{n'}$ are not disjoint.  This implies $\rho = \rho'$, so $\Phi(\omega\otimes K_n) = \Phi(K_m \otimes \sigma)$.  Since $\Phi$ is bijective,  $\omega \otimes K_n = K_m \otimes \sigma$ follows.  This is possible only if $m = n = 1$.  If $m = n = 1$, then all of \eqref{choice1}, \eqref{choice2}, \eqref{choice3}, \eqref{choice4} hold. Similarly if \eqref{choice2} and \eqref{choice3} both held then $m = n = 1$ is again forced.  Thus the possibilities  are that \eqref{choice1} and \eqref{choice3} both hold (which is the same as statement (i) of the lemma), or that \eqref{choice2} and \eqref{choice4} hold (equivalent to (ii)), or that $m = n = 1$, in which case both (i) and (ii) hold.

Finally, since the affine dimensions of $K_p$ and $K_q$ are different when $p \not= q$,  the statement in the last sentence of the lemma follows.

\end{proof}

If $\psi_1:K_m\to K_m$ and $\psi_2:K_n \to K_n$ are affine automorphisms, then we can extend each to linear maps on the linear span, and form the tensor product $\psi_1 \otimes \psi_2$. This will be bijective, but not necessarily positive.  (A well known example of this phenomenon occurs when $\psi_1$ is the identity map and $\psi_2$ is the transpose map.) However,  $\psi_1$ and $\psi_2$ will map pure states to pure states, and hence $\psi_1 \otimes \psi_2$ will map pure product states to pure product states.  Thus $\psi_1 \otimes \psi_2$ will map $S$ onto $S$, and hence will be an affine automorphism of $S$. We will now see that all affine automorphisms of $S$ are either such a tensor product  of automorphisms or such a tensor product composed with the swap automorphism.

\begin{theorem}\label{factoring}
If $m \not= n$, and $\Phi:S\to S$ is an affine automorphism, then there exist unique affine automorphisms $\psi_1:K_m\to K_m$ and $\psi_2: K_n \to K_n$ such that $\Phi = \psi_1 \otimes \psi_2$.  If $m =n$ then either we can write $\Phi =   (\psi_1 \otimes \psi_2)$ or $\Phi = \alpha_{m,m} \circ  (\psi_1 \otimes \psi_2)$, where $\psi_1, \psi_2$ are again unique affine automorphisms  of $K_m$ and $K_n$ respectively, and $\alpha_{m,m}:S \to S$ is the swap automorphism.
\end{theorem}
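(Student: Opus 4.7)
The plan is to apply Lemma \ref{factor} with $m'=m$ and $n'=n$. Case (ii) of that lemma forces $m=n'=n$ and $n=m'=m$, so when $m\ne n$ only case (i) can occur; when $m=n$ and case (ii) holds, I will reduce to case (i) by pre-composing with $\alpha_{m,m}$.

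Assume case (i). For each $\omega\in\partial_e K_m$, let $\psi_1(\omega)$ be the unique element of $\partial_e K_m$ for which $\Phi(\omega\otimes K_n)=\psi_1(\omega)\otimes K_n$, and define $\psi_2$ on $\partial_e K_n$ symmetrically. For pure $\omega\in K_m$ and $\sigma\in K_n$, the image $\Phi(\omega\otimes\sigma)$ lies in both $\psi_1(\omega)\otimes K_n$ and $K_m\otimes\psi_2(\sigma)$; a partial-trace argument shows these sets meet in the single point $\psi_1(\omega)\otimes\psi_2(\sigma)$, so $\Phi(\omega\otimes\sigma)=\psi_1(\omega)\otimes\psi_2(\sigma)$ on all pure product states.

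To promote $\psi_1$ to an affine automorphism of $K_m$, I fix any $\sigma_0\in\partial_e K_n$ and observe that $K_m\otimes\sigma_0$ is a face of $K$ contained in $S$, affinely isomorphic to $K_m$ via $\omega\mapsto\omega\otimes\sigma_0$, and that $\Phi$ sends it affinely onto $K_m\otimes\psi_2(\sigma_0)$, which is again affinely isomorphic to $K_m$. Composing with these identifications yields an affine automorphism of $K_m$ agreeing with $\psi_1$ on every extreme point; since $K_m$ is the convex hull of its extreme points, two such automorphisms built from different choices of $\sigma_0$ coincide, so $\psi_1$ extends unambiguously to an affine automorphism of $K_m$, and similarly $\psi_2$ extends to $K_n$. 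Extending $\psi_1$ and $\psi_2$ linearly to the self-adjoint parts of $\B(\C^m)$ and $\B(\C^n)$ and tensoring gives a well-defined linear map $\psi_1\otimes\psi_2$ that agrees with $\Phi$ on pure product states; because both maps are affine on $S$ and every separable state is a convex combination of pure product states, equality persists throughout $S$.

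Uniqueness is immediate: if $\psi_1\otimes\psi_2=\psi_1'\otimes\psi_2'$, then for every pair of pure $\omega,\sigma$ the identity $\psi_1(\omega)\otimes\psi_2(\sigma)=\psi_1'(\omega)\otimes\psi_2'(\sigma)$ forces $\psi_1(\omega)=\psi_1'(\omega)$ and $\psi_2(\sigma)=\psi_2'(\sigma)$ by partial traces, and then $\psi_i=\psi_i'$ on all of $K_m,K_n$ by affine extension from extreme points. Finally, when $m=n$ and Lemma \ref{factor} gives case (ii) for $\Phi$, the composite $\alpha_{m,m}\circ\Phi$ sends each $\omega\otimes K_n$ to $\tau\otimes K_n$ and each $K_m\otimes\sigma$ to $K_m\otimes\rho'$, so it satisfies case (i); the argument above yields $\alpha_{m,m}\circ\Phi=\psi_1\otimes\psi_2$, and since $\alpha_{m,m}^2=\mathrm{id}$ we conclude $\Phi=\alpha_{m,m}\circ(\psi_1\otimes\psi_2)$. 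I expect the main subtle step to be showing that the face-by-face restrictions glue into a single globally affine automorphism $\psi_1:K_m\to K_m$; this is resolved by the affine identification of $K_m\otimes\sigma_0$ with $K_m$ combined with the principle that affine maps on $K_m$ are determined by their values on extreme points.
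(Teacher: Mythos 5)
Your proposal is correct and follows essentially the same route as the paper: apply Lemma \ref{factor}, use case (i) to define $\psi_1,\psi_2$ on extreme points, and reduce case (ii) to case (i) by composing with $\alpha_{m,m}$. The only difference is that you spell out (via restriction of $\Phi$ to the embedded copies $K_m\otimes\sigma_0$) why $\psi_1$ and $\psi_2$ are affine automorphisms, a step the paper compresses into the sentence ``Since $\Phi$ is bijective and affine, so are $\psi_1$ and $\psi_2$.''
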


\begin{proof}  We apply Lemma \ref{factor}. For each $\omega \in \partial_eK_m$ and $\sigma \in \partial_eK_n$, define $\phi_\sigma:K_m \to K_m$ and $\psi_\omega: K_n \to K_n$ by
$$\Phi(\omega \otimes \sigma) = \phi_\sigma(\omega) \otimes \psi_\omega(\sigma).$$
Suppose first that case (i) of Lemma \ref{factor} occurs.  Then $\psi_\sigma(\omega)$ is independent of $\sigma$ and $\psi_\omega(\sigma) $ is independent of $\omega$. Therefore there are functions $\psi_1:K_m \to K_m$ and $\psi_2: K_n \to K_n$ such that 
$$\Phi(\omega \otimes \sigma) = \psi_1(\omega) \otimes \psi_2(\sigma).$$
Since $\Phi$ is bijective and affine, so are $\psi_1$ and $\psi_2$.

Suppose instead that case (ii) of Lemma \ref{factor} occurs. Then $m = n$. If we define $\Phi'=\alpha_{m,m}\circ \Phi$, then $\Phi': S\to S$ satisfies case (i) of Lemma \ref{factor}.  Then from the first paragraph we can choose affine automorphisms $\psi_1:K_m \to K_m$ and $\psi_2: K_n \to K_n$ such that $\Phi' = \psi_1 \otimes \psi_2$.  Since $\alpha_{m,m}^2 $ is the identity map, then $\Phi = \alpha_{m,m} \circ (\psi_1 \otimes \psi_2)$.
\end{proof}

We review some well known facts about affine automorphisms of state spaces and maps on the underlying algebra. Let $\psi$ be an affine automorphism of $K_m$. Then $\psi$ extends uniquely to a linear map on the linear span of $K_m$, which we also denote by $\psi$, and this map is the dual of a unique linear map $\psi_*$ on $\B(\C^m)$. 
By a result of Kadison \cite{Kad-Isom} $\psi_*$ will be a *-isomorphism or a *-anti-isomorphism.  (Since the restriction of an affine automorphism to pure states  preserves transition probabilities, this also follows from Wigner's theorem \cite{Wigner}).   The map $\psi_*$ will be a *-isomorphism iff $\psi_*$  is completely positive, which is equivalent to $\psi$ being completely positive.     If $\psi_*$ is a *-isomorphism, then $\psi_*$ is implemented by a unitary, i.e.,   there is a unitary $U\in \B(\C^m)$ such that $\psi_*(A) = UAU^*$.

If $\psi_*$ is a *-anti-isomorphism, then the composition of $\psi_*$ with the transpose map  (in either order) gives a *-isomorphism, and the map $\psi_*$ is completely copositive.     It follows that an affine automorphism $\psi$ of $K_m$ is either completely positive or completely copositive, and $\psi $ is completely positive iff $\psi ^{-1}$ is completely positive.  
If $t$ denotes the transpose map on $\B(\C^m)$ or $\B(\C^n)$, then $t$ is positive but  $t \otimes id$ and $id \otimes t$ are not positive on $\B(\C^m) \otimes \B(\C^n)$ if $m, n > 1$.
Background can be found in  \cite[Chapters 4, 5]{Alfsen-Shultz}.

Recall that a \emph{local unitary} in $\B(\C^m \otimes \C^n)$ is a tensor product $U_1 \otimes U_2$ of unitaries.

\begin{theorem}
Every affine automorphism of the space $S$ of separable states on $\B(\C^m \otimes \C^n)$ is the dual of   conjugation by local unitaries,  one of the two partial transpose maps, the swap map (if $m = n$), or a composition of these maps.   An affine automorphism $\Phi$ of $S$ extends uniquely to an affine automorphism of the full state space $K$ iff it can be expressed as one of the compositions just mentioned with  both or neither of the partial transpose maps involved.  
\end{theorem}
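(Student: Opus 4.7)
The plan is to combine Theorem~\ref{factoring} with the Kadison--Wigner description of affine automorphisms of a single matrix state space recalled just before the statement. By Theorem~\ref{factoring}, any affine automorphism $\Phi$ of $S$ factors as $\Phi = \psi_1\otimes\psi_2$, or (when $m=n$) as $\Phi = \alpha_{m,m}\circ(\psi_1\otimes\psi_2)$, for unique affine automorphisms $\psi_1$ of $K_m$ and $\psi_2$ of $K_n$. By Kadison's theorem each predual $(\psi_i)_*$ is either a *-isomorphism, in which case it is conjugation by a unitary $U_i$, or a *-anti-isomorphism, in which case it has the form $\mathrm{Ad}(U_i)\circ t$ for a unitary $U_i$ and the transpose $t$. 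Writing $\epsilon_i\in\{\mathrm{id},t\}$ for the transpose factor of $(\psi_i)_*$, we have
$$(\psi_1)_*\otimes(\psi_2)_* \;=\; \mathrm{Ad}(U_1\otimes U_2)\circ(\epsilon_1\otimes\epsilon_2),$$
and dualizing identifies $\Phi$ as a composition of the dual of conjugation by the local unitary $U_1\otimes U_2$, zero, one, or two of the partial transpose maps, and (if $m=n$) possibly the swap $\alpha_{m,m}$. This establishes the first assertion.

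For the second assertion, I first observe that $\B(\C^m\otimes\C^n)^* = \B(\C^m)^*\otimes\B(\C^n)^*$ and pure states span each factor, so the pure product states affinely span the same real subspace as $K$. Consequently any affine automorphism of $S$ admits at most one affine extension to $K$, and the question reduces to whether the unique linear extension to the real span is positive. If zero partial transposes appear, the dual map on $\B(\C^m\otimes\C^n)$ is a *-automorphism (conjugation by a local unitary, possibly composed with the swap), so the extension exists. If both partial transposes appear, then $\epsilon_1\otimes\epsilon_2 = t\otimes t$ is the total transpose on $\B(\C^m\otimes\C^n)$, which is a *-anti-automorphism; composing with a local unitary conjugation and possibly the swap again gives a positive bijection, so the extension exists. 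If exactly one partial transpose appears, the dual map has the form $t\otimes\mathrm{id}$ or $\mathrm{id}\otimes t$ composed with *-automorphisms; since a single partial transpose is not positive on $\B(\C^m)\otimes\B(\C^n)$ for $m,n>1$ (as recalled in the discussion preceding the statement), the composition fails to be positive, and no extension to $K$ exists.

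The main obstacle is simply the bookkeeping across the dualization: one must verify that the transpose/non-transpose split of each $(\psi_i)_*$ translates into the stated combination of partial transposes on the tensor algebra. This is handled by the factor-by-factor identity displayed above, together with the observation that $t\otimes t$ equals the total transpose (hence a *-anti-automorphism) while $t\otimes\mathrm{id}$ and $\mathrm{id}\otimes t$ are not positive. Degenerate cases with $m=1$ or $n=1$, should they arise, reduce directly to the Kadison--Wigner description of automorphisms of a single matrix state space, and the partial transpose/full transpose distinction collapses, so there is nothing separate to check.
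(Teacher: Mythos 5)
Your proposal is correct and follows essentially the same route as the paper: factor $\Phi$ via Theorem~\ref{factoring}, apply the Kadison--Wigner description to each tensor factor, and decide extendability to $K$ by the positivity or non-positivity of the resulting combination of partial transposes, using that the span of $S$ contains $K$ for uniqueness. The paper phrases the dichotomy as each $\psi_i$ being completely positive versus completely copositive rather than tracking an explicit transpose factor $\epsilon_i$ in the predual, but this is the same argument in equivalent language.
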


\begin{proof}  We note first that if $m = 1$ or $n = 1$, the result is clear, so we assume hereafter that $m \ge 2$ and $n \ge 2$. 

We next show that if $\psi_1:K_m \to K_m$ and $\psi_2:K_n \to K_n$ are affine automorphisms, then $\Phi = \psi_1 \otimes \psi_2$ is an affine automorphism of $K$ iff $\psi_1$ and $\psi_2$ are both completely positive or both completely copositive. 

If $\psi_1$ and $\psi_2$ are completely positive, then $\Phi = \psi_1\otimes \psi_2 = (id \otimes \psi_2) \circ (\psi_1 \otimes id)$ is positive; hence $\Phi(K) \subset K$.  Furthermore, $\psi_1^{-1}$ and $\psi_2^{-1}$ will be completely positive, so $\Phi^{-1}$ is positive, and hence $\Phi(K) = K$.   If $\psi_1$ and $\psi_2$ are completely copositive, then $(t\circ \psi_1) \otimes (t\circ \psi_2)$ is positive. Composing with $t \otimes t$ shows $\psi_1 \otimes \psi_2$ is positive and as above we conclude that $\Phi(K) = K$.  On the other hand, if $\psi_1$ is completely positive and $\psi_2$ is completely copositive, then $\psi_1 \otimes (t\circ \psi_2)$ is positive, so $(id \otimes t)\circ (\psi_1 \otimes \psi_2)$ is positive.  If  $(\psi_1 \otimes \psi_2)(K)=K$, then $id\otimes t$ would be positive, a contradiction since $m, n \ge 2$.  Thus in this case $\psi_1 \otimes \psi_2$ is not an affine automorphism of $K$.
 
 If $\psi_1$ and $\psi_2$ are completely positive, then they are implemented by unitaries, so $\Phi = \psi_1\otimes \psi_2$ is implemented by a local unitary.  If both are completely copositive, then $t\circ \psi_1$ and $t\circ \psi_2$ are implemented by unitaries, so $(t\otimes t) \circ (\psi_1 \otimes \psi_2)$ is implemented by a local unitary.  Then $\Phi = (t\otimes t) \circ (t\otimes t) \circ (\psi_1 \otimes \psi_2)$ is the composition of the transpose map on $K$ and conjugation by local unitaries.

 The first statement of the theorem now follows from Theorem \ref{factoring}. Uniqueness follows from the fact that the linear span of $S$ contains $K$.
\end{proof}

\begin{definition} Let $\Phi:K\to K$ be an affine automorphism.  We say $\Phi$ \emph{preserves separability} if $\Phi$ takes separable states to separable states, i.e., if $\Phi(S) \subset S$.  A state $\omega$ in $K$ is \emph{entangled} if $\omega$ is not separable.  $\Phi$ \emph{preserves entanglement} if $\Phi$ takes entangled states to entangled states.
\end{definition}

\begin{corollary} Let $\Phi: K_{m,n} \to K_{m,n}$ be an affine automorphism. Then $\Phi$  preserves entanglement and separability iff $\Phi$ is a composition of maps of the types (i)   conjugation by local unitaries, (ii) the transpose map, (iii) the swap automorphism (in the case that $m = n$).
\end{corollary}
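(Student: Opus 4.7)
The plan is to combine the previous theorem with the elementary fact that an affine automorphism $\Phi$ of $K$ preserves both separability and entanglement if and only if $\Phi(S)=S$.

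\textbf{Sufficiency.} I would verify that every generator from (i), (ii), (iii) is an affine automorphism of $K$ that sends product states to product states, and hence sends $S$ onto $S$. Conjugation by a local unitary $U_1\otimes U_2$ dualizes to a map whose action on $\omega\otimes\sigma$ is again a product state; the full transpose $t\otimes t$ is positive with $(t\otimes t)^2=\mathrm{id}$ and sends $\omega\otimes\sigma$ to $(\omega\circ t)\otimes(\sigma\circ t)$; the swap is the dual of a *-automorphism and sends $\omega\otimes\sigma$ to $\sigma\otimes\omega$. Any composition of these maps shares these properties, and its inverse is again a composition of the same type, so it preserves separable states in both directions, which is equivalent to preserving both separability and entanglement.

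\textbf{Necessity.} Suppose $\Phi$ preserves both separability and entanglement. Since $\Phi$ is a bijection of $K$ that sends $S$ into $S$ and $K\setminus S$ into $K\setminus S$, the hypotheses force $\Phi(S)=S$, so $\Phi|_S$ is an affine automorphism of $S$. By the previous theorem, $\Phi|_S$ is a composition of (duals of) local unitary conjugations, partial transposes, and (if $m=n$) the swap, and such a composition extends to an affine automorphism of $K$ if and only if either no partial transpose or both partial transposes appear. Since $\Phi$ itself provides such an extension, one of these alternatives must hold. If no partial transpose appears, $\Phi|_S$ is built from local unitaries and possibly the swap; if both appear, their composition gives the full transpose $t\otimes t$, so $\Phi|_S$ is built from local unitaries, $t\otimes t$, and possibly the swap. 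In either case $\Phi|_S$ is the restriction of a composition of the form (i), (ii), (iii), and uniqueness of the extension (the linear span of $S$ contains $K$) then yields the same description of $\Phi$ on all of $K$.

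The main subtlety is the bookkeeping identification of ``both partial transposes'' in the previous theorem with the full transpose map listed as (ii) in the present corollary. Once this matching is made explicit, the result follows immediately from the previous theorem together with the observation that the preservation hypothesis on $\Phi$ is equivalent to $\Phi(S)=S$; no new geometric or facial argument is required.
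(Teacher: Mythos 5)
Your proposal is correct and follows essentially the same route as the paper: the paper's own proof consists of exactly your key observation that preserving both separability and entanglement is equivalent to $\Phi(S)=S$ (using bijectivity of $\Phi$ on $K$), after which everything is delegated to the preceding theorem. Your additional bookkeeping --- checking the generators for sufficiency, and matching ``both partial transposes'' with the full transpose $t\otimes t$ via the extension criterion --- simply makes explicit what the paper leaves implicit.
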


\begin{proof}
If $\Phi$  preserves entanglement and separability, then $\Phi$ maps $S$ into $S$ and $K\setminus S$ into $K\setminus S$, which is equivalent to $\Phi(S) = S$. 
\end{proof}

\begin{corollary}\label{one} If $\Phi_t: S \to S$ is a one-parameter  group of  affine automorphisms, then there are one-parameter groups of unitaries $U_t$ and $V_t$ such that 
$\Phi_t(\omega(A)) = \omega((U_t \otimes V_t)A(U_t^* \otimes V_t^*))$.
\end{corollary}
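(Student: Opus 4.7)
The plan is to apply Theorem \ref{factoring} pointwise in $t$, use its uniqueness clause to transfer the group law to the tensor factors, and then lift the resulting projective one-parameter subgroups of $PU(m)$ and $PU(n)$ to honest one-parameter groups of unitaries. I take $t \mapsto \Phi_t$ to be continuous, as is standard for a one-parameter group.

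First, for each $t$ Theorem \ref{factoring} writes $\Phi_t = \psi_1(t) \otimes \psi_2(t)$, or possibly $\Phi_t = \alpha_{m,m} \circ (\psi_1(t) \otimes \psi_2(t))$ when $m = n$, with the $\psi_i(t)$ uniquely determined. The composition with $\alpha_{m,m}$ is topologically separated from the pure tensor form in the automorphism group of $S$, so continuity together with $\Phi_0 = \mathrm{id}$ forces $\Phi_t = \psi_1(t) \otimes \psi_2(t)$ for all $t$. The same connectedness argument, applied now to the automorphism groups of $K_m$ and $K_n$, forces each $\psi_i(t)_*$ to lie in the identity component and therefore to be a *-isomorphism rather than a *-anti-isomorphism. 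Combining $\Phi_{t+s} = \Phi_t \circ \Phi_s$ with uniqueness of the factorization yields $\psi_i(t+s) = \psi_i(t) \circ \psi_i(s)$, so that $\{\psi_1(t)\}$ and $\{\psi_2(t)\}$ are continuous one-parameter subgroups of the affine automorphism groups of $K_m$ and $K_n$.

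Each $\psi_i(t)_*$ is then inner, implemented by a unitary that is unique only up to a scalar of modulus one. The remaining task, which is the main obstacle, is to choose these scalars coherently so that $\{U_t\}$ and $\{V_t\}$ are themselves one-parameter groups and not merely families satisfying a projective group law. I would handle this via the covering maps $U(m) \to PU(m)$ and $U(n) \to PU(n)$: since $\R$ is simply connected, a continuous one-parameter subgroup of $PU(m)$ admits a unique continuous lift to a one-parameter subgroup of $U(m)$ with $U_0 = I$, and similarly for $U(n)$. Lifting the classes of the implementing unitaries in this way produces the desired $U_t$ and $V_t$, after which the identity $(\Phi_t\omega)(A) = \omega((U_t \otimes V_t) A (U_t^* \otimes V_t^*))$ follows by dualizing $\psi_1(t)_* \otimes \psi_2(t)_*$.
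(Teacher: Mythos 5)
Your overall strategy coincides with the paper's: factor $\Phi_t$ via Theorem \ref{factoring}, rule out the swap, use uniqueness of the factorization to get one-parameter groups on each tensor factor, and then implement those by one-parameter groups of unitaries. You diverge in the two sub-steps, and each divergence deserves a comment. To eliminate the swap, the paper argues algebraically: writing $\Phi_t = \alpha\circ(\phi_t\otimes\psi_t)$ and computing $\Phi_{2t}=\Phi_t\circ\Phi_t$ shows that $\Phi_{2t}$ is a plain tensor product (since $\alpha\circ(\phi\otimes\psi)\circ\alpha = \psi\otimes\phi$), and since $t\mapsto 2t$ is onto $\R$ the swap is never needed. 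This uses only the group law and no topology. Your route instead asserts that the swap coset is ``topologically separated'' from the subgroup of product automorphisms and invokes continuity and $\Phi_0=\mathrm{id}$. That assertion is true (one can check that no sequence $\alpha\circ(\psi_1^{(k)}\otimes\psi_2^{(k)})$ can converge to the identity by evaluating on suitable pure product states), but you do not prove it, and it is exactly the kind of claim that needs an argument; the paper's two-line computation is both cleaner and hypothesis-free. Similarly, anti-isomorphisms are excluded algebraically by $\psi_i(t)_*=\psi_i(t/2)_*\circ\psi_i(t/2)_*$ without appeal to connectedness. For the final step the paper simply cites Kadison's theorem that a one-parameter group of affine automorphisms of a state space is implemented by a one-parameter group of unitaries, which packages the phase-coherence problem you attack by hand. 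Your lifting idea is essentially right, but as stated it has a flaw: $U(m)\to PU(m)$ is \emph{not} a covering map (its kernel $U(1)\cdot I$ is not discrete), so the simply-connectedness-of-$\R$ lifting argument does not apply to it directly; you should lift through the genuine covering $SU(m)\to PU(m)$ (discrete kernel $\Z_m$), where a continuous homomorphism from the simply connected group $\R$ does lift uniquely to a homomorphism. With that repair, and with a proof (or at least an honest acknowledgement) of the clopen-ness claim, your argument goes through; note also that your explicit continuity hypothesis on $t\mapsto\Phi_t$ is reasonable and is in any case implicitly required by the Kadison result the paper uses.
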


\begin{proof}
For each $t$, factor $\Phi_t = \phi_t \otimes \psi_t$ or $\Phi_t = \alpha \circ (\phi_t \otimes \psi_t)$, where $\alpha$ is the swap automorphism.  In the latter case, 
$$\Phi_{2t} = \Phi_t \circ \Phi_t = \alpha \circ (\phi_t \otimes \psi_t) \circ \alpha \circ (\phi_t \otimes \psi_t)$$
$$= (\phi_t \otimes \psi_t) \circ (\phi_t \otimes \psi_t) = (\phi_t\circ \phi_t) \otimes (\psi_t \circ \psi_t).$$
It follows that the swap automorphism is not needed for $\Phi_{2t}$, and hence for $\Phi_t$ for any $t$.  Uniqueness of the factorization $\Phi_t = \phi_t \otimes \psi_t$ shows that $\phi_t$ and $\psi_t$ are also one parameter groups of affine automorphisms. By a result of Kadison \cite{Kad}, such automorphisms are implemented by one parameter groups of unitaries.

\end{proof}

\begin{corollary} If $\Phi_t: K \to K$ is a one-parameter  group of  entanglement preserving affine automorphisms, then there are one-parameter groups of unitaries $U_t$ and $V_t$ such that 
$\Phi_t(\omega(A)) = \omega((U_t \otimes V_t)A(U_t^* \otimes V_t^*))$.
\end{corollary}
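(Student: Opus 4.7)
The plan is to reduce the statement to the previous corollary by showing each $\Phi_t$ restricts to an affine automorphism of $S$, apply that corollary on $S$, and then promote the resulting formula from $S$ back to $K$.

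First I will show $\Phi_t(S)=S$ for every $t$. The hypothesis gives $\Phi_t(K\setminus S)\subset K\setminus S$. Since $\{\Phi_t\}$ is a group, $\Phi_{-t}=\Phi_t^{-1}$ also belongs to the family and is therefore entanglement preserving, so $\Phi_{-t}(K\setminus S)\subset K\setminus S$, which in view of $\Phi_t\circ\Phi_{-t}=\mathrm{id}$ is equivalent to $\Phi_t(K\setminus S)\supset K\setminus S$. Combining the two inclusions yields $\Phi_t(K\setminus S)=K\setminus S$, and since $\Phi_t$ is a bijection of $K$ this forces $\Phi_t(S)=S$.

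With this in hand, the restrictions $\Phi_t|_S:S\to S$ inherit the one-parameter group structure and are affine automorphisms of $S$, so Corollary \ref{one} produces one-parameter unitary groups $U_t$ in $\B(\C^m)$ and $V_t$ in $\B(\C^n)$ such that
$$\Phi_t(\omega)(A)=\omega\bigl((U_t\otimes V_t)\,A\,(U_t^*\otimes V_t^*)\bigr)\qquad\text{for all }\omega\in S,\ A\in\B(\C^m\otimes\C^n).$$

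It remains to extend this identity from $\omega\in S$ to $\omega\in K$. For fixed $t$ and $A$, both sides are affine functionals of $\omega$ on $K$. Every pure product state $\omega_{x\otimes y}$ has density $|x\rangle\langle x|\otimes|y\rangle\langle y|$, and as $x,y$ range over unit vectors these rank-one product projections span the self-adjoint part of $\B(\C^m\otimes\C^n)$ over $\R$; hence the real linear span of $S$ coincides with that of $K$, so two affine functionals on $K$ agreeing on $S$ must agree on all of $K$. The only delicate step is the first—that one-sided entanglement preservation together with the group law forces full preservation of $S$—but it resolves cleanly by invoking the group inverse, and thereafter the proof is essentially bookkeeping on top of the preceding corollary.
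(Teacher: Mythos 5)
Your proof is correct and follows essentially the same route as the paper: use the group law to see that $\Phi_{-t}=\Phi_t^{-1}$ also preserves entanglement, conclude $\Phi_t(S)=S$, and then invoke Corollary \ref{one}. The extra step you include about extending the unitary formula from $S$ back to $K$ is a harmless elaboration (the paper relies on the fact that the linear span of $S$ contains $K$, noted in the proof of the preceding theorem).
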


\begin{proof} Since $\Phi_t$ and $(\Phi_t)^{-1} = \Phi_{-t}$ preserve entanglement, then $\Phi_t$ maps $S$ onto $S$, so this corollary follows from Corollary \ref{one}.

\end{proof}

\end{document}